\newcommand{\Rmnum}[1]{\expandafter\@slowromancap\romannumeral #1@}
\newtheorem{theorem}{Theorem}[section]
\newtheorem{lemma}[theorem]{Lemma}
\newtheorem{proposition}[theorem]{Proposition}
\theoremstyle{definition}
\newtheorem{definition}[theorem]{Definition}
\theoremstyle{remark}
\newtheorem{remark}[theorem]{Remark}
\numberwithin{equation}{section}
\begin{document}

\title{Levi-type Schur-Sergeev duality for general linear super groups}

\author{Di Wang}
\address{School of mathematical sciences, East China Normal University,
Shanghai, 200241,  People's Republic of China}
\email{52195500007@stu.ecnu.edu.cn}
\thanks{This work is supported by
the National Natural Science Foundation of China (NSFC) Gr:12071136}
\thanks{The author thanks the supervisor Bin Shu for his suggestion for this research and discussion,
also expresses her thanks to the referees for helpful comments.}




\keywords{supergroup, enhanced vector space,
Schur superalgebra, Schur duality}

\begin{abstract}
In this note,  we investigate a kind of double centralizer property for general linear supergroups. For the super space $V=\mathbb{K}^{m\mid n}$ over an algebraically closed field $\mathbb{K}$ whose characteristic is not equal to $2$, we consider its $\mathbb{Z}_2$-homogeneous one-dimensional extension $\underline V=V\oplus\mathbb{K}v$, and the natural action of the supergroup
$\tilde G:=\text{GL}(V)\times \textbf{G}_m$ on $\underline V$. Then we have  the tensor product supermodule ($\underline{V}^{\otimes r}$, $\rho_r$) of $\tilde G$. We present a kind of generalized Schur-Sergeev duality which is said that  the Schur superalgebras $S'(m|n,r)$ of $\tilde G$ and a so-called weak degenerate double Hecke algebra $\underline{\mathcal{H}}_r$ are double centralizers.
The weak degenerate double Hecke algebra is an infinite dimensional algebra, which has a natural representation on the tensor product space.
This notion comes from \cite{B-Y-Y2020}, with a little modification.
\end{abstract}
\maketitle

\section{Introduction}\label{sec0}
In the super mathematics, there are some results similar to the classical Schur-Weyl duality introduced in \cite{G1980} and \cite{G-W2009}.

Let $\mathbb{K}$ be a field with characteristic $p\neq 2$, and $V:=\mathbb{K}^{m|n}$ the super vector space over $\mathbb{K}$.
Denote by  $\mathrm{GL}(m|n)$ the supergroup functor from the category of super commutative $\mathbb{K}$-superalgebras to the category of groups,
which is called the general linear supergroup. More details about supergroups can be found in \cite{C-C-F2011} and \cite{B-K2001}.
Let $\mathfrak{S}_d$ be the symmetric group of $d$ elements.
$S(m|n)$ is the Schur superalgebra produced from the action of $\mathrm{GL(m|n)}$ on $V^{\otimes d}$. The super-version of Schur-Weyl duality over $\mathbb{K}$ is introduced in \cite[section 5]{B-K2002}:
\begin{align*}
	\mathrm{End}_{\mathbb{K}\mathfrak{S}_d}(V^{\otimes d})\cong &S(m|n);\\
	&\mathrm{End}_{S(m|n)}(V^{\otimes d})\cong \mathbb{K}\mathfrak{S}_d, d\leq m+n.
\end{align*}
And for Lie superalgebras, there are double centralizer properties for type A Lie superalgebras and queer Lie superalgebras over complex numbers with more details in \cite{C-W2012}, which are called Schur-Sergeev dualities.

 Denote by $W$ the $n$ dimensional vector space over complex number $\mathbb{C}$, and $\underline{W}$ the enhanced vector space which is the one-dimensional extension of $W$. The image $\mathfrak{L}(n,r)$ of the Levi subgroup $\mathrm{GL(W)}\times \mathbf{G}_m$ in $\mathrm{End}(\underline{W}^{\otimes r})$ is called Levi Schur algebra in \cite{B-Y-Y2020}.
In \cite{B-Y-Y2020}, the  authors investigated the structure and the duality of $\mathfrak{L}(n,r)$ and construct an algebraic model so-called degenerate double Hecke algebras. Then \cite{B-Y-Y2020} studied the degenerate double Hecke algebra of $\mathfrak{S}_r$, denoted by $\mathcal{H}_r$, and its representation $\Xi$ on $\underline{W}^{\otimes r}$.
Then the following duality called Levi Schur-Weyl duality was proved in \cite{B-Y-Y2020}:
\begin{align*}
\mathrm{End}_{\mathfrak{L}(n,r)}(\underline{W}^{\otimes r})=&\Xi(\mathcal{H}_r);\\
&\mathrm{End}_{\Xi(\mathcal{H}_r)}(\underline{W}^{\otimes r})=\mathfrak{L}(n,r).
\end{align*}

The purpose of the present  paper  is to establish the double centralizer property of $\mathrm{GL(m|n)}\times \mathbf{G}_m$ in $\mathrm{End}_{\mathbb{K}}(\underline{V}^{\otimes r})$.
We still use the algebraic model produced in \cite{B-Y-Y2020}.
We extend the super vector space $V$ by an $\mathbb{Z}_2$-homogeneous vector $v$, and also say that the new super vector space is the enhanced super vector space of $V$, denoted by $\underline{V}$.
If $\bar{v}=\bar{1}$, the representation of $\mathcal{H}_r$ doesn't satisfy the equation (3.5) $\mathbf{s}_i\mathbf{x}_\sigma=\mathbf{x}_\sigma=\mathbf{x}_\sigma\mathbf{s}_i$ in \cite{B-Y-Y2020}.
So we change it to $\mathbf{s}_i\mathbf{x}_\sigma=\mathbf{x}_\sigma\mathbf{s}_i$, and name the new algebra by
weak degenerate double Hecke algebra, denoted it by $\mathcal{\underline{H}}_r$. We still use $\Xi$ to denote the representation of $\mathcal{\underline{H}}_r$ on $\underline{V}^{\otimes r}$.

This paper can be divided into three sections. In the first section, we introduce the Schur superalgebras $S'(m|n,r)$, and other basic concepts, notations and conclusions we will use in this paper.
In the second part, we introduce the weak degenerate double Hecke algebras and its representation on $\underline{V}^{\otimes r}$. Denote by $D(m|n,r)$ the image of its representation on $\underline{V}^{\otimes r}$.  In the third section, we prove the Levi-type Schur-Sergeev duality for general linear super groups:
\begin{align*}
	S'(m|n,r)\cong&\mathrm{End}_{D(m|n,r)}(\underline{V}^{\otimes r});\\
	&\mathrm{End}_{S'(m|n,r)}(\underline{V}^{\otimes r})\cong D(m|n,r) \text{ for } r\leq m+n.
\end{align*}

\section{Preliminaries}\label{sec1}
Let $\mathbb{K}$ be an algebraically closed field with characteristic $p\neq 2$.
\subsection{The general linear supergroup}
The notions and notations in this section are the same as in \cite[section 2]{B-K2002}.
\begin{definition}$($\cite[section 2]{B-K2002}$)$
The general linear supergroup $\mathrm{GL(m|n)}$ is a supergroup functor from the category of super commutative superalgebras to the category of groups defined on a super commutative superalgebra $A$ by letting $\mathrm{GL(m|n)}(A)$ be the group of all invertible $(m+n)\times(m+n)$ matrices of the form

\begin{equation}\label{equ:4.1}
	\begin{pmatrix}
		W  &X \\
		Y  &Z
	\end{pmatrix}
\end{equation}
where $W$ is an $m\times m$ matrix with entries in $A_{\bar{0}}$,
$X$ is an $m\times n$ matrix with entries in $A_{\bar{1}}$,
$Y$ is an $n\times m$ matrix with entries in $A_{\bar{1}}$,
$Z$ is an $n\times n$ matrix with entries in $A_{\bar{0}}$.
\end{definition}

Denote by $T^W_{ij}$ (respectively $T^X_{ij}$, $T^Y_{ij}$, $T^Z_{ij}$) the function mapping $W$ (respectively $X$, $Y$, $Z$)to its $(i,j)$entry.
Then the  coordinate ring of $\mathrm{GL(m|n)}$ can be expressed by $\mathbb{K}[\mathrm{GL(m|n)}]=\mathbb{K}[T^W_{ij},1\leq i,j\leq m;T^X_{ij},1\leq i\leq m,1\leq j\leq n;T^Y_{ij},1\leq i\leq n,1\leq j\leq m;T^Z_{ij},1\leq i,j\leq n]_{\mathrm{det}}$.
Let the parities of all $T^W_{ij}$ and $T^Z_{ij}$ be even, and the parities of all $T^X_{ij}$ and $T^Y_{ij}$ be odd.

Define $\bar{i}=\bar{0}$ for $i=1,...,m$, and $\bar{i}=\bar{1}$ for $i=m+1,...,m+n$.
Let $T_{ij}$ be the function mapping a matrix to its $(i,j)$entry,
and the parity of $T_{ij}$ be $\bar{i}+\bar{j}$.
Then
$$T_{ij}=
\begin{cases}
	&T^W_{ij}, \text{ if } 1\leq i,j\leq m \\
	&T^X_{i,j-m},\text{ if } 1\leq i\leq m \text{ and } m+1\leq j\leq m+n\\
	&T^Y_{i-m,j}, \text{ if } m+1\leq i\leq m+n \text{ and } 1\leq j\leq m\\
	&T^Z_{i-m,j-m}, \text{ if }  m+1\leq i,j\leq m+n
\end{cases}.$$

Then $\mathbb{K}[\mathrm{GL(m|n)}]$ is a Hopf superalgebra with the comultiplication and the counit
$$\Delta(T_{ij})=\sum_{h=1}^{m+n}T_{ih}\otimes T_{hj},$$
$$\varepsilon(T_{ij})=\delta_{ij}$$
for all $1\leq i,j\leq m+n$.

Let $\tilde{T}_{ij}=(-1)^{\bar{i}(\bar{i}+\bar{j})}T_{ij}$ for $1\leq i,j\leq m+n$.
\subsection{The supergroup $\mathrm{GL(m\mid n)}\times \mathbf{G_{m}}$}
\begin{definition}
	
	Let $\mathrm{\tilde G}:=\mathrm{GL(m|n)}\times \mathbf{G_{m}}$ be a functor from the category of super commutative superalgebras to the category of groups defined on a super commutative superalgebra $A$ by letting $\mathrm{\tilde G}(A)$ be the group of all invertible $(m+1+n)\times(m+1+n)$ matrices of the form
	\begin{equation}\label{equ:4.1}
		\begin{pmatrix}
			W & 0 &X \\
			0&a  &0 \\
			Y & 0 &Z
		\end{pmatrix}
	\end{equation}
	where $W$, $X$, $Y$ and $Z$ have the same form as above,
	and $a$ is an invertible elements in $A_{\bar{0}}.$
	
	The elements in $\mathrm{\tilde G}(A)$ can be written as $(g,a)$ where $g\in \mathrm{GL(m|n)}(A)$ and $a$  is an invertible element in $A_{\bar{0}}$.
\end{definition}
Let $$S_{ij}=
\begin{cases}
	&T^W_{ij}, \text{ if } 1\leq i,j\leq m \\
&T^X_{i,j-m},\text{ if } 1\leq i\leq m \text{ and } m+1\leq j\leq m+n\\
&T^Y_{i-m,j}, \text{ if } m+1\leq i\leq m+n \text{ and } 1\leq j\leq m\\
&T^Z_{i-m,j-m}, \text{ if }  m+1\leq i,j\leq m+n
\end{cases}$$
and $S:=T_{m+1,m+1}$, the function mapping a matrix to its $(m+1,m+1)$ entry.
Recall that $\bar{i}=\bar{0}$ for $i=1,...,m$, and $\bar{i}=\bar{1}$ for $i=m+1,...,m+n$.
We let $\tilde S_{ij}:=(-1)^{\bar{i}(\bar{i}+\bar{j})}S_{ij}$ for $1\leq i,j\leq m+n$.
The parity of $\tilde{S}_{ij}$ is still $\bar{i}+\bar{j}$
and $\bar{S}=\bar{0}$.

Then the coordinate algebra $\mathbb{K}[\mathrm{\tilde G}]$ is a super subalgebra of $\mathbb{K}[\mathrm{GL(m+1|n)}]$ and $\mathbb{K}[\mathrm{GL(m|n+1)}]$,
with $\mathbb{K}[\mathrm{\tilde G}]=\mathbb{K}[\tilde{S}_{ij},S,1\leq i,j\leq m+n]_{\mathrm{det}}$.
The coordinate algebra $\mathbb{K}[\mathrm{\tilde G}]$ is naturally a Hopf superalgebra with coproduct, counit defining as below

$$\Delta(\tilde{S}_{ij})=\sum_{h=1}^{m+n}(-1)^{(\bar{i}+\bar{h})(\bar{h}+\bar{j})}\tilde{S}_{ih}\otimes \tilde{S}_{hj},1\leq i,j\leq m+n,$$
$$\Delta(S)=S\otimes S,$$
$$\varepsilon(\tilde{S}_{ij})=(-1)^{\bar{i}(\bar{i}+\bar{j})}\delta_{ij},1\leq i,j\leq m+n,$$
$$\varepsilon(S)=1.$$

\subsection{Notions and notations}
Keep the same notions and notations as those in \cite[section 2]{B-K2002}.

For a super coalgebra $C$ with the comultiplication $\Delta$ and its dual superalgebra $C^*$,
the operator $\overline{\otimes}$ means $(x\overline{\otimes}y)(f\otimes g)=(-1)^{\bar{y}\bar{f}}x(f)y(g)$
for homogeneous elements $x,y\in C^*$, $f,g\in C$.

For a positive integer  $l$,
denote by $I(m|n,l)$ the set of all functions from $\{1,...,l\}$ to $\{1,...,m+n\}$. For $\mathbf{i}\in I(m|n,l)$, write $\mathbf{i}=(i_1,...,i_l)$ with $i_j\in \{1,...,m+n\},1\leq j\leq l$. Define $\mathbf{\bar{i}}=\bar{i}_1+..+\bar{i}_l$.

Let $\epsilon_{\mathbf{i}}=(\bar{i}_1,...,\bar{i}_l)\in \mathbb{Z}^{l}_{2}$ for $\mathbf{i}\in I(m|n,l)$. For $\epsilon=(\epsilon_{1},...,\epsilon_{l})\in \mathbb{Z}^{l}_{2}$, $\delta=(\delta_1,..,\delta_l)\in \mathbb{Z}^{l}_{2}$, and
$w\in \mathfrak{S}_l$, the symmetric group,
let
$$ \alpha(\epsilon,\delta)=\prod_{1\leq s<t\leq l}(-1)^{\epsilon_t\delta s},$$
$$ \gamma(\epsilon,w)=\prod_{1\leq s<t\leq l,\atop w^{-1}s>w^{-1}t}(-1)^{\epsilon_s\epsilon_t}.$$
Define $\mathbf{i}\cdot w=(i_1,...,i_l)\cdot w=(i_{w1},...,i_{wl})$ for $w\in \mathfrak{S}_l$ and $\mathbf{i}\in I(m|n,l)$.

The following equation is used frequently in the present paper. It can be showed by a direct computation.
\begin{align*}
	&\alpha(\epsilon_{\mathbf{i}w}+\epsilon_{\mathbf{j}w},\epsilon_{\mathbf{j}w})
	=\prod_{1\leq s<t\leq l}(-1)^{(\bar{i}_{wt}+\bar{j}_{wt})(\bar{j}_{ws})}\\
	&=\prod_{1\leq w^{-1}s<w^{-1}t\leq l}(-1)^{(\bar{i}_{t}+\bar{j}_{t})(\bar{j}_{s})}\\
	&=\prod_{1\leq w^{-1}s<w^{-1}t\leq l,\atop1\leq s<t\leq l}(-1)^{(\bar{i}_{t}+\bar{j}_{t})(\bar{j}_{s})}\prod_{1\leq w^{-1}t<w^{-1}s\leq l,\atop1\leq s<t\leq l}(-1)^{(\bar{i}_{s}+\bar{j}_{s})(\bar{j}_{t})}\\
	&~\prod_{1\leq w^{-1}t<w^{-1}s\leq l,\atop1\leq s<t\leq l}(-1)^{(\bar{i}_{t}+\bar{j}_{t})(\bar{j}_{s})}\prod_{1\leq w^{-1}t<w^{-1}s\leq l,\atop1\leq s<t\leq l}(-1)^{(\bar{i}_{t}+\bar{j}_{t})(\bar{j}_{s})}\\
	&=\prod_{1\leq s<t\leq l}(-1)^{(\bar{i}_{t}+\bar{j}_{t})(\bar{j}_{s})}\prod_{1\leq w^{-1}t<w^{-1}s\leq l,\atop1\leq s<t\leq l}(-1)^{(\bar{i}_{s}+\bar{j}_{s})(\bar{j}_{t})+(\bar{i}_{t}+\bar{j}_{t})(\bar{j}_{s})}\\
	 &=\alpha(\epsilon_{\mathbf{i}}+\epsilon_{\mathbf{j}},\epsilon_{\mathbf{j}})\gamma(\epsilon_{\mathbf{i}}+\epsilon_{\mathbf{j}},w)\gamma(\epsilon_{\mathbf{i}},w)\gamma(\epsilon_{\mathbf{j}},w).
\end{align*}

Write $(\mathbf{i},\mathbf{j})\sim (\mathbf{k},\mathbf{l})$,
if $(\mathbf{i},\mathbf{j})$ and $(\mathbf{k},\mathbf{l})$ lie in the same orbit for the associated diagonal action of $\mathfrak{S}_l$ on $I(m|n,l)\times I(m|n,l)$.
Say $(\mathbf{i},\mathbf{j})\in I(m|n,l)\times I(m|n,l)$ is \emph{strict} if $(\bar{i}_s+\bar{j}_s)(\bar{i}_t+\bar{j}_t)=0$ whenever $(\bar{i}_s,\bar{j}_s)=(\bar{i}_t,\bar{j}_t)$ for $1\leq s<t\leq l$.
Denote by $I^2(m|n,l)$ the set of all strict double indexes.
Note $(\mathbf{i},\mathbf{j})$ is strict if and only if the element $\tilde{T}_{\mathbf{i},\mathbf{j}}:=\tilde{T}_{i_1,j_1}...\tilde{T}_{i_l,j_l}$ and
$\tilde{S}_{\mathbf{i},\mathbf{j}}:=\tilde{S}_{i_1,j_1}...\tilde{S}_{i_l,j_l}$ are nonzero.
Denote by $\Omega(m|n,l)$ a fixed set of orbit representatives for the action of $\mathfrak{S}_l$ on $I^2(m|n,l)$.

With $w\in\mathfrak{S}_l$, define $\sigma(\mathbf{i},\mathbf{j};\mathbf{k},\mathbf{l})=
\gamma(\epsilon_{\mathbf{i}}+\epsilon_{\mathbf{j}},w)$
if $(\mathbf{i},\mathbf{j})\cdot w=(\mathbf{k},\mathbf{l})$.
Then $\tilde{T}_{\mathbf{k},\mathbf{l}}=
\sigma(\mathbf{i},\mathbf{j};\mathbf{k},\mathbf{l})\tilde{T}_{\mathbf{i},\mathbf{j}}$.
\subsection{The enhanced natural representation of $\mathrm{GL(m|n)}$ on $\underline{V}$}
\subsubsection{} We adopt some notations from \cite[section 2]{B-K2002}.
Let $V=\mathbb{K}^{m\mid n}$ be the natural representation of $\mathrm{GL(m|n)}$,
the $(m|n)$ dimensional super vector space with canonical basis $v_1,...,v_{m+n}$ where
$\bar{v}_i=\bar{i}$.
Any element of $V\otimes A$ can be written as the form
$\Sigma_{i=1}^{m+n}v_i\otimes a_i$ with all $a_i\in A$.
View the  element of $V\otimes A$ as a column vector $(a_1,...,a_{m+n})^T$,
the transposition of $(a_1,...,a_{m+n})$.
Then the $\mathrm{G}(A)$-action on $V\otimes A$ is the usual one by left multiplication.
The induced comodule structure map $\mathbf{\eta}: V\rightarrow V\otimes k[\mathrm{GL(m|n)}]$ is given by
\begin{equation}\label{eqn1.1}
	\mathbf{\eta}(v_j)=\sum^{m+n}_{i=1}(-1)^{\bar{i}(\bar{i}+\bar{j})}v_i\otimes \tilde{T}_{ij}.
\end{equation}

Let $V^{\otimes l}$ be the $l$th tensor product of $V$,
then $V^{\otimes l}$ becomes an $\mathrm{GL(m|n)}$-module by diagonal action.
The comodule map of $V^{\otimes l}$ induced from  $\mathbf{\eta}$ is still denoted by $\mathbf{\eta}$.
Given $\mathbf{i}\in I(m|n,l)$, let $v_{\mathbf{i}}=v_{i_1}\otimes...\otimes v_{i_l}\in V^{\otimes l}$, giving a basis $\{v_{\mathbf{i}}\}_{\mathbf{i}\in I(m|n,l)}$ for the tensor space $V^{\otimes l}$.
The parity of $v_{\mathbf{i}}$ is $\overline{v}_{i_1}+\overline{v}_{i_2}+...+\overline{v}_{i_l}$.

From the proof of \cite[Lemma 5.1]{B-K2002}, we know the structure map $\mathbf{\eta}:V^{\otimes l}\rightarrow V^{\otimes l}\bigotimes \mathbb{K}[\mathrm{GL(m|n)}]$ satisfies
$$\mathbf{\eta}(v_{\mathbf{j}})=\sum_{\mathbf{i}\in I(m|n,l)}(-1)^{\bar{\mathbf{i}}(\bar{\mathbf{i}}+\bar{\mathbf{j}})}\alpha(\epsilon_{\mathbf{i}}+\epsilon_{\mathbf{j}},\epsilon_{\mathbf{i}})v_{\mathbf{i}}\otimes \tilde T_{\mathbf{i},\mathbf{j}}.$$

\subsubsection{}  We adopt some notations from \cite[3.3]{B-Y-Y2020}.
Let $\underline{V}=V\oplus \mathbb{K}v$ with $v$ being $\mathbb{Z}_2$-homogeneous.
$\underline{V}$ is called the enhanced vector space of $V$ and $v$ is called the enhanced vector of $V$.
There are two possibilities for the parity of $v$:
$\bar{v}=\bar{0}$ or $\bar{v}=\bar{1}$.
The elements in $\underline{V}\otimes A$ can be written as $\sum\limits_{i=1}^{m+n}v_i\otimes b_i+v\otimes b$.
Similarly, we can identify elements of $\underline{V}\otimes A$ with column vectors via
$$\sum\limits_{i=1}^{m+n}v_i\otimes b_i+v\otimes b\longleftrightarrow
\begin{pmatrix}
	b_1 \\
	\vdots \\
	b_m\\
	b\\
	b_{m+1}\\
	\vdots \\
	b_{m+n}
\end{pmatrix}.$$
The action of $\mathrm{\tilde G}(A)$ on $\underline{V}\otimes A$ is the  left multiplication, this is,
\begin{equation}
	(g,a)(\sum_{i=1}^{m+n}v_i\otimes b_i+v\otimes b)=g(\sum_{i=1}^{m+n}v_i\otimes b_i)+v\otimes ab
\end{equation}
The induced comodule map of $\underline{V}$ is
\begin{equation}
	\eta(v_j)=\sum^{m+n}_{i=1}(-1)^{\bar{i}(\bar{i}+\bar{j})}v_i\otimes \tilde{S}_{ij}\text{ for } 1\leq j\leq m+n;
\end{equation}
\begin{equation}
	\eta(v)=v\otimes S.
\end{equation}

There are some notations in the following.

Let $basis(V):=\{v_1,...,v_{m+n}\}$.

$\left|I\right|$ stands for the cardinality of a finite set $I$. For a positive integral $t$, denote by $\underline{t}$ the set $\{1,2,...,t\}.$

From \cite[3.3]{B-Y-Y2020}, the $r$th tensor product of $\underline{V}$ can be decomposed into
\begin{equation}
	\underline{V}^{\otimes r}=\bigoplus_{l=0}^{r}\underline{V}_l^{\otimes r}\\
	=\bigoplus_{l=0}^{r}\bigoplus_{I\subseteq \underline{r},\atop|I|=l}\underline{V}_I^{\otimes r}
\end{equation}
where
$\underline{V}_I^{\otimes r}=\mathrm{Span}_{\mathbb{K}}\{w_{1}\otimes w_{2}\otimes...\otimes w_{r}\mid w_{j}\in basis(V)\text{ if }j\in I\subset\underline{r}\text{, otherwise }w_{j}=v\}$,
and $\underline{V}_l^{\otimes r}=\bigoplus\limits_{I\subseteq \underline{r},\atop|I|=l}\underline{V}_I^{\otimes r}$.

For $I\subset\underline{r}$, let $basis(\underline{V}_I^{\otimes r}):=\{w_{1}\otimes w_{2}\otimes...\otimes w_{r}\mid w_{j}\in basis(V)\text{ if }j\in I\text{, otherwise }w_{j}=v\}$.
For a fixed set $I=\{j_1,...,j_l\}\subset\underline{r}$  with $j_1<j_2<...<j_l$ and $\mathbf{i}=(i_1,...,i_l)\in I(m|n,l)$, let $v_{\mathbf{i},I}=w_{1}\otimes ...\otimes w_{r}\in basis(\underline{V}_{I}^{\otimes r})$ satisfying $w_{j_k}=v_{i_k}$ for $1\leq k\leq l$.
In particular, for $0\leq l\leq r$, $v_{\mathbf{i},\underline{l}}=v_{i_1}\otimes v_{i_2}\otimes...\otimes v_{i_l}\otimes\underbrace{v\otimes...\otimes v}_{r-l}\in basis(\underline{V}^{\otimes r}_{\underline{l}})$.

Recall the definition of $\epsilon_{\mathbf{i}}$ for $\mathbf{i}\in I(m|n,l)$. We define $\epsilon_{\mathbf{i},I}=(\bar{w}_1,\bar{w}_2,...,\bar{w}_r)$ for $v_{\mathbf{i},I}=w_{1}\otimes ...\otimes w_{r}$.

\subsection{Schur superalgebras}
Let $A(m|n)$ be the super commutative super algebra generated by $\{\tilde{T}_{ij}\mid 1\leq i,j\leq m+n\}$.
Then $A(m|n)=\bigoplus_{l} A(m|n,l)$ where $A(m|n,l)$ is spanned by $\{\tilde{T}_{\mathbf{i},\mathbf{j}}=\tilde{T}_{i_1,j_1}...\tilde{T}_{i_l,j_l}\mid(\mathbf{i},\mathbf{j})\in\Omega(m|n,l)\}$.
\subsubsection{ }
Then $\eta(V^{\otimes l})\subset V^{\otimes l}\otimes A(m|n,l)$.
Let $S(m|n,l):=\mathrm{Hom}_{\mathbb{K}}(A(m|n,l),\mathbb{K})$,
which is called a \emph{Schur superalgebra} by defining the product $\xi_1\xi_2$ for homogeneous $\xi_1,\xi_2\in A(m|n,l)$ via $(\xi_1\xi_2)(f)=(\xi_1\overline{\otimes} \xi_2)\Delta(f)$ for $f\in A(m|n,l)$.
Then we can view  $V^{\otimes l}$ as a left $S(m|n,l)$-module with the action defined by $\xi(v_{\mathbf{i}}):=(\mathrm{id}_{V^{\otimes r}}\overline{\otimes} \xi)\mathbf{\eta}(v_{\mathbf{i}})$
for $\xi\in S(m|n,l), v_{\mathbf{i}}\in V^{\otimes l}$.

For $(\mathbf{i},\mathbf{j})\in I^2(m|n,l)$, let $\xi_{\mathbf{i},\mathbf{j}}$ be the unique element satisfying
$$\xi_{\mathbf{i},\mathbf{j}}(\tilde{T}_{\mathbf{i},\mathbf{j}})
=\alpha(\epsilon_{\mathbf{i}}+\epsilon_{\mathbf{j}},\epsilon_{\mathbf{i}}+\epsilon_{\mathbf{j}}),\quad
\xi_{\mathbf{i},\mathbf{j}}(\tilde{T}_{\mathbf{k},\mathbf{l}})=0 \ \text{for all }
(\mathbf{i},\mathbf{j})\nsim (\mathbf{k},\mathbf{l}).$$
The elements $\{\xi_{\mathbf{i},\mathbf{j}}\}_{(\mathbf{i},\mathbf{j})\in \Omega(m|n,l)}$ give a basis for $S(m|n,l).$ And the parity of $\xi_{\mathbf{i},\mathbf{j}}$ is $\overline{\mathbf{i}}+\overline{\mathbf{j}}:=\bar{i}_1+...+\bar{i}_l+\bar{j}_1+...+\bar{j}_l$.

Let $e_{\mathbf{i},\mathbf{j}}=e_{i_1,j_1}\otimes...\otimes e_{i_l,j_l}$ where $e_{ij}$ is the $(i,j)$-matrix unit.
Then $e_{\mathbf{i},\mathbf{j}}\in \mathrm{End}_\mathbb{K}(V)^{\otimes l}\cong \mathrm{End}_\mathbb{K}(V^{\otimes l})$ and $e_{\mathbf{i},\mathbf{j}}v_{\mathbf{k}}
=\delta_{\mathbf{j},\mathbf{\mathbf{k}}}\alpha(\epsilon_{\mathbf{i}}+\epsilon_{\mathbf{j}},\epsilon_{\mathbf{\mathbf{j}}})v_{\mathbf{i}}$
for $\mathbf{i},\mathbf{j},\mathbf{\mathbf{k}}\in I(m|n,l)$.

\begin{lemma}$($\cite[Lemma 5.1]{B-K2002}\label{Lemma5.1}$)$
	The representation $\rho_r:S(m|n,r)\rightarrow \mathrm{End}_\mathbb{K}(V^{\otimes r})$ is faithful and satisfies
	$$\rho_r(\xi_{\mathbf{i},\mathbf{j}})=\sum_{(\mathbf{\mathbf{k}},\mathbf{l})\sim (\mathbf{i},\mathbf{j})}\sigma(\mathbf{i},\mathbf{j};\mathbf{k},\mathbf{l})e_{\mathbf{k},\mathbf{l}}$$
	for each $(\mathbf{i},\mathbf{j})\in I^2(m|n,r)$.
	Moreover, for $(\mathbf{k},\mathbf{l}),(\mathbf{i},\mathbf{j})\in I^2(m|n,r)$,
	$$\xi_{\mathbf{i},\mathbf{j}}\xi_{\mathbf{k},\mathbf{l}}=
	\sum_{(\mathbf{s},\mathbf{t})\in \Omega(m\mid n,r)}a_{\mathbf{i},\mathbf{j},\mathbf{k},\mathbf{l},\mathbf{s},\mathbf{t}}
	\xi_{\mathbf{s},\mathbf{t}}$$
	where $a_{\mathbf{i},\mathbf{j},\mathbf{k},\mathbf{l},\mathbf{s},\mathbf{t}}=
	\sum \sigma(\mathbf{i},\mathbf{j};\mathbf{s},\mathbf{h})
	\sigma(\mathbf{k},\mathbf{l};\mathbf{h},\mathbf{t})
	\alpha(\epsilon_{\mathbf{s}}+\epsilon_{\mathbf{h}},\epsilon_{\mathbf{h}}+\epsilon_{\mathbf{t}})$
	summing over all $\mathbf{h}\in I(m|n,r)$ with $(\mathbf{s},\mathbf{h})\sim (\mathbf{i},\mathbf{j}),(\mathbf{h},\mathbf{t})\sim (\mathbf{k},\mathbf{l})$.
\end{lemma}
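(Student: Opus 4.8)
The plan is to transport the abstract product of $S(m|n,r)$ to explicit endomorphisms of $V^{\otimes r}$, where the whole statement becomes a computation with matrix units. Throughout I would use three elementary properties of the sign functions: that $\alpha$ is additive in each of its two slots (so $\alpha(\epsilon,\delta+\delta')=\alpha(\epsilon,\delta)\alpha(\epsilon,\delta')$ and likewise on the left), that the diagonal value $\alpha(\epsilon,\epsilon)$ depends only on the number of odd entries of $\epsilon$ and is therefore invariant under permuting those entries, and that the diagonal $\mathfrak{S}_r$-action preserves total parity, so $(\mathbf{q},\mathbf{p})\sim(\mathbf{i},\mathbf{j})$ forces $\bar{\mathbf{i}}+\bar{\mathbf{j}}=\bar{\mathbf{q}}+\bar{\mathbf{p}}$.

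First I would compute $\rho_r(\xi_{\mathbf{i},\mathbf{j}})(v_{\mathbf{p}})=(\mathrm{id}\,\overline{\otimes}\,\xi_{\mathbf{i},\mathbf{j}})\eta(v_{\mathbf{p}})$ by inserting the explicit comodule formula for $\eta(v_{\mathbf{p}})$ and applying the twisted evaluation $\overline{\otimes}$, which contributes the Koszul sign $(-1)^{(\bar{\mathbf{i}}+\bar{\mathbf{j}})\bar{\mathbf{q}}}$ to the $v_{\mathbf{q}}$-term. Because $\xi_{\mathbf{i},\mathbf{j}}$ is dual to $\tilde T_{\mathbf{i},\mathbf{j}}$, only the indices $\mathbf{q}$ with $(\mathbf{q},\mathbf{p})\sim(\mathbf{i},\mathbf{j})$ survive, and on those the relation $\tilde T_{\mathbf{q},\mathbf{p}}=\sigma(\mathbf{i},\mathbf{j};\mathbf{q},\mathbf{p})\tilde T_{\mathbf{i},\mathbf{j}}$ gives $\xi_{\mathbf{i},\mathbf{j}}(\tilde T_{\mathbf{q},\mathbf{p}})=\sigma(\mathbf{i},\mathbf{j};\mathbf{q},\mathbf{p})\alpha(\epsilon_{\mathbf{i}}+\epsilon_{\mathbf{j}},\epsilon_{\mathbf{i}}+\epsilon_{\mathbf{j}})$. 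Comparing with $e_{\mathbf{q},\mathbf{p}}v_{\mathbf{p}}=\alpha(\epsilon_{\mathbf{q}}+\epsilon_{\mathbf{p}},\epsilon_{\mathbf{p}})v_{\mathbf{q}}$, the claimed formula $\rho_r(\xi_{\mathbf{i},\mathbf{j}})=\sum_{(\mathbf{k},\mathbf{l})\sim(\mathbf{i},\mathbf{j})}\sigma(\mathbf{i},\mathbf{j};\mathbf{k},\mathbf{l})e_{\mathbf{k},\mathbf{l}}$ reduces, after the $\sigma$-factors cancel, to the scalar identity $(-1)^{\bar{\mathbf{q}}(\bar{\mathbf{q}}+\bar{\mathbf{p}})+(\bar{\mathbf{i}}+\bar{\mathbf{j}})\bar{\mathbf{q}}}\alpha(\epsilon_{\mathbf{q}}+\epsilon_{\mathbf{p}},\epsilon_{\mathbf{q}})\alpha(\epsilon_{\mathbf{i}}+\epsilon_{\mathbf{j}},\epsilon_{\mathbf{i}}+\epsilon_{\mathbf{j}})=\alpha(\epsilon_{\mathbf{q}}+\epsilon_{\mathbf{p}},\epsilon_{\mathbf{p}})$ for $(\mathbf{q},\mathbf{p})\sim(\mathbf{i},\mathbf{j})$. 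Parity preservation kills the explicit signs, and the remaining $\alpha$-identity follows from additivity together with $\alpha(\epsilon_{\mathbf{q}}+\epsilon_{\mathbf{p}},\epsilon_{\mathbf{q}}+\epsilon_{\mathbf{p}})=\alpha(\epsilon_{\mathbf{i}}+\epsilon_{\mathbf{j}},\epsilon_{\mathbf{i}}+\epsilon_{\mathbf{j}})$, which holds since $\epsilon_{\mathbf{q}}+\epsilon_{\mathbf{p}}$ is a permutation of $\epsilon_{\mathbf{i}}+\epsilon_{\mathbf{j}}$.

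Faithfulness is then immediate: for $(\mathbf{i},\mathbf{j})\in\Omega(m|n,r)$ the operator $\rho_r(\xi_{\mathbf{i},\mathbf{j}})$ is a $\pm1$-combination of the matrix units supported on the single orbit of $(\mathbf{i},\mathbf{j})$; distinct representatives give disjoint orbits, and the $e_{\mathbf{k},\mathbf{l}}$ are linearly independent in $\mathrm{End}_{\mathbb{K}}(V^{\otimes r})$, so the images of the basis $\{\xi_{\mathbf{i},\mathbf{j}}\}$ are linearly independent and $\rho_r$ is injective. For the multiplication rule I would now work inside $\mathrm{End}_{\mathbb{K}}(V^{\otimes r})$ through this injection. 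Multiplying the two matrix-unit expansions and using the product law $e_{\mathbf{s},\mathbf{h}}e_{\mathbf{h}',\mathbf{t}}=\delta_{\mathbf{h},\mathbf{h}'}\alpha(\epsilon_{\mathbf{s}}+\epsilon_{\mathbf{h}},\epsilon_{\mathbf{h}}+\epsilon_{\mathbf{t}})e_{\mathbf{s},\mathbf{t}}$ --- itself a one-line consequence of $e_{\mathbf{a},\mathbf{b}}v_{\mathbf{c}}=\delta_{\mathbf{b},\mathbf{c}}\alpha(\epsilon_{\mathbf{a}}+\epsilon_{\mathbf{b}},\epsilon_{\mathbf{b}})v_{\mathbf{a}}$ and additivity of $\alpha$ --- yields $\rho_r(\xi_{\mathbf{i},\mathbf{j}})\rho_r(\xi_{\mathbf{k},\mathbf{l}})$ as a sum of matrix units $e_{\mathbf{s},\mathbf{t}}$ with coefficient $\sum_{\mathbf{h}}\sigma(\mathbf{i},\mathbf{j};\mathbf{s},\mathbf{h})\sigma(\mathbf{k},\mathbf{l};\mathbf{h},\mathbf{t})\alpha(\epsilon_{\mathbf{s}}+\epsilon_{\mathbf{h}},\epsilon_{\mathbf{h}}+\epsilon_{\mathbf{t}})$, the sum being over $\mathbf{h}$ with $(\mathbf{s},\mathbf{h})\sim(\mathbf{i},\mathbf{j})$ and $(\mathbf{h},\mathbf{t})\sim(\mathbf{k},\mathbf{l})$. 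Finally, since each $e_{\mathbf{s},\mathbf{t}}$ occurs in $\rho_r(\xi_{\mathbf{s}',\mathbf{t}'})$ for exactly one representative $(\mathbf{s}',\mathbf{t}')\in\Omega(m|n,r)$, and $\sigma(\mathbf{s},\mathbf{t};\mathbf{s},\mathbf{t})=\gamma(\epsilon_{\mathbf{s}}+\epsilon_{\mathbf{t}},\mathrm{id})=1$, choosing $(\mathbf{s},\mathbf{t})\in\Omega(m|n,r)$ and reading off the coefficient of $e_{\mathbf{s},\mathbf{t}}$ identifies it with $a_{\mathbf{i},\mathbf{j},\mathbf{k},\mathbf{l},\mathbf{s},\mathbf{t}}$; faithfulness lifts the identity back to $S(m|n,r)$.

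The hard part will be the sign bookkeeping in the first step, where the Koszul twist from $\overline{\otimes}$, the parity factor built into $\eta$, and the normalization $\alpha(\epsilon_{\mathbf{i}}+\epsilon_{\mathbf{j}},\epsilon_{\mathbf{i}}+\epsilon_{\mathbf{j}})$ hidden in $\xi_{\mathbf{i},\mathbf{j}}$ must be shown to collapse precisely to $\sigma(\mathbf{i},\mathbf{j};\mathbf{k},\mathbf{l})$; a single misplaced sign there would corrupt the matrix-unit product and hence the structure constants $a_{\mathbf{i},\mathbf{j},\mathbf{k},\mathbf{l},\mathbf{s},\mathbf{t}}$. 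A secondary subtlety is to extract each structure constant only at an orbit representative $(\mathbf{s},\mathbf{t})$, where $\sigma=1$, so that no competing contributions from other representatives enter the coefficient of $e_{\mathbf{s},\mathbf{t}}$.
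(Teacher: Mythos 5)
Your proposal is correct and follows essentially the same route as the source: the paper itself cites this lemma from Brundan--Kujawa without reproving it, but its proof of the enhanced analogue (Lemma \ref{LLe}) uses exactly your strategy --- compute $\rho_r(\xi_{\mathbf{i},\mathbf{j}})$ on basis vectors via the comodule map and the duality pairing, expand in the matrix units $e_{\mathbf{k},\mathbf{l}}$, deduce faithfulness from linear independence over disjoint orbits, multiply the matrix-unit expansions, and lift the structure constants back through the faithful map by reading off coefficients at orbit representatives where $\sigma=1$. Your sign bookkeeping (the Koszul twist cancelling against the $\eta$-parity factor by orbit parity preservation, and the $\alpha$-identity via additivity plus permutation invariance of $\alpha(\epsilon,\epsilon)$) checks out.
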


In the proof of \cite[Lemma 5.1]{B-K2002}, we know that
$$\xi_{\mathbf{i},\mathbf{j}}v_{\mathbf{l}}=\sum_{(\mathbf{k},\mathbf{l})\sim (\mathbf{i},\mathbf{j})}\sigma(\mathbf{i},\mathbf{j};\mathbf{k},\mathbf{l})\alpha(\epsilon_{\mathbf{k}}+\epsilon_{\mathbf{l}},\epsilon_{\mathbf{l}})v_{\mathbf{k}}.$$

\subsubsection{ }
Let $A'(m|n,r):=\mathrm{Span}_{\mathbb{K}}\{\tilde{S}_{\mathbf{i},\mathbf{j}}S^{r-l},S^{r}\mid 1\leq l\leq r,(\mathbf{i},\mathbf{j})\in \Omega(m|n,l)\}.$
Then $\eta(\underline{V}^{\otimes r})\subset \underline{V}^{\otimes r}\otimes A'(m|n,r)$.
Define $S'(m|n,r):=\mathrm{Hom}_{\mathbb{K}}(A'(m|n,r),\mathbb{K})$.

Define $\xi_{\mathbf{i},\mathbf{j},l}\in S'(m|n,r)$ and $\xi_0$ for
$1\leq l\leq r$ and $(\mathbf{i},\mathbf{j})\in I^2(m|n,l)$ satisfying the following conditions:

(1) $\xi_{\mathbf{i},\mathbf{j},l}(S^{r})=0$ and $\xi_{\mathbf{i},\mathbf{j},l}(\tilde{S}_{\mathbf{k},\mathbf{l}}S^{r-d})=0$
for $(\mathbf{i},\mathbf{j})\nsim(\mathbf{k},\mathbf{l})$ or $l\neq d$;

(2) $\xi_{\mathbf{i},\mathbf{j},l}(\tilde{S}_{\mathbf{i},\mathbf{j}}S^{r-l})
=\alpha(\epsilon_{\mathbf{i}}+\epsilon_{\mathbf{j}},
\epsilon_{\mathbf{i}}+\epsilon_{\mathbf{j}})
=\xi_{\mathbf{i},\mathbf{j}}(\tilde{S}_{\mathbf{i},\mathbf{j}})$;

(3) $\xi_{0}(S^{r})=1$ and $\xi_{0}(\tilde{S}_{\mathbf{i},\mathbf{j}}S^{r-l})=0$.

In particular, $\xi_{\mathbf{i},\mathbf{j},r}(\tilde{S}_{\mathbf{i},\mathbf{j}})=\alpha(\epsilon_{\mathbf{i}}+\epsilon_{\mathbf{j}},
\epsilon_{\mathbf{i}}+\epsilon_{\mathbf{j}})$.
Recall that $\Omega(m|n,l)$ denotes a fixed set of orbit representatives for the action of $\mathfrak{S}_l$ on $I^2(m|n,l)$.
Then we can choose $\{\xi_0, \xi_{\mathbf{i},\mathbf{j},l}\mid {1\leq l\leq r,(\mathbf{i},\mathbf{j})\in \Omega(m|n,l)}\}$ as a basis of $S'(m|n,r)$.
For $1\leq l\leq r$ and $(\mathbf{i},\mathbf{j})\in \Omega(m|n,l)$, the parity of $\xi_{\mathbf{i},\mathbf{j},l}$ is still $\overline{\mathbf{i}}+\overline{\mathbf{j}}$. And the parity of $\xi_0$ is $\bar{0}$.
Then $S'(m|n,r)$ is a super vector subspace of $S(m+1|n,r)$ or $S(m|n+1,r)$.

The following equation is independent of the parity of $v$.
\begin{proposition}\label{pro4.2}
	For any $I\subset\underline{r}$ with $|I|=l$, $1\leq l,k\leq r$, $(\mathbf{i},\mathbf{j})\in \Omega(m|n,k)$, and $\mathbf{t}\in I(m\mid n,l)$,
	$$\xi_{\mathbf{i},\mathbf{j},k}v_{\mathbf{t},I}=\delta_{kl}\sum_{(\mathbf{k},\mathbf{t})\sim (\mathbf{i},\mathbf{j})}\sigma(\mathbf{i},\mathbf{j};\mathbf{k},\mathbf{t})\alpha(\epsilon_{\mathbf{k},I}+\epsilon_{\mathbf{t},I},\epsilon_{\mathbf{t},I})v_{\mathbf{k},I}.$$
	Furthermore, we know $	 \xi_{\mathbf{i},\mathbf{j},l}v_{\mathbf{t},\underline{l}}=\xi_{\mathbf{i},\mathbf{j}}(v_{\mathbf{t}})\otimes v^{\otimes (r-l)}$.
\end{proposition}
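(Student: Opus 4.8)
The plan is to compute $\xi_{\mathbf{i},\mathbf{j},k}v_{\mathbf{t},I}$ straight from the module action $\xi(v_{\mathbf{t},I})=(\mathrm{id}\,\overline{\otimes}\,\xi)\eta(v_{\mathbf{t},I})$, and to reduce the sign bookkeeping to the action formula on $V^{\otimes l}$ recorded after Lemma \ref{Lemma5.1}, namely $\xi_{\mathbf{i},\mathbf{j}}v_{\mathbf{t}}=\sum_{(\mathbf{k},\mathbf{t})\sim(\mathbf{i},\mathbf{j})}\sigma(\mathbf{i},\mathbf{j};\mathbf{k},\mathbf{t})\alpha(\epsilon_{\mathbf{k}}+\epsilon_{\mathbf{t}},\epsilon_{\mathbf{t}})v_{\mathbf{k}}$. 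Write $I=\{j_1<\cdots<j_l\}$. First I would expand $\eta(v_{\mathbf{t},I})$ by the diagonal coaction: at each slot $j_s\in I$ the map $\eta$ contributes $\sum_{k_s}(-1)^{\bar{k}_s(\bar{k}_s+\bar{t}_s)}v_{k_s}\otimes\tilde{S}_{k_s,t_s}$, while at each slot outside $I$ it contributes $v\otimes S$. Since $\bar{S}=\bar{0}$, the factor $S$ carries no Koszul sign and can be pulled to the right freely, so every resulting term has coefficient exactly $\tilde{S}_{\mathbf{k},\mathbf{t}}S^{r-l}$ (in standard order, because the elements of $I$ are increasing) and vector $v_{\mathbf{k},I}$; the only surviving sign is the Koszul sign $\prod_{p<q}(-1)^{\overline{c_p}\,\overline{w_q}}$ from transporting each coefficient past the later vectors.

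Next I would apply $\mathrm{id}\,\overline{\otimes}\,\xi_{\mathbf{i},\mathbf{j},k}$ and invoke the defining conditions (1)--(3). Because $\xi_{\mathbf{i},\mathbf{j},k}$ vanishes on $\tilde{S}_{\mathbf{k},\mathbf{t}}S^{r-d}$ whenever the length $d$ of the double index differs from $k$, and that length is here $l$, every term dies unless $l=k$; this yields the factor $\delta_{kl}$. For $k=l$, condition (1) further forces $(\mathbf{k},\mathbf{t})\sim(\mathbf{i},\mathbf{j})$, and the relation $\tilde{S}_{\mathbf{k},\mathbf{t}}=\sigma(\mathbf{i},\mathbf{j};\mathbf{k},\mathbf{t})\tilde{S}_{\mathbf{i},\mathbf{j}}$, which holds for $\tilde{S}$ by the same computation as the displayed relation for $\tilde{T}$ (the two having identical comultiplication), together with condition (2) gives $\xi_{\mathbf{i},\mathbf{j},l}(\tilde{S}_{\mathbf{k},\mathbf{t}}S^{r-l})=\sigma(\mathbf{i},\mathbf{j};\mathbf{k},\mathbf{t})\alpha(\epsilon_{\mathbf{i}}+\epsilon_{\mathbf{j}},\epsilon_{\mathbf{i}}+\epsilon_{\mathbf{j}})$.

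The heart of the argument, and the step I expect to be the main obstacle, is to check that the accumulated signs equal $\alpha(\epsilon_{\mathbf{k},I}+\epsilon_{\mathbf{t},I},\epsilon_{\mathbf{t},I})$. My strategy is to compare with the $V^{\otimes l}$ computation rather than evaluate from scratch. The explicit $\eta$-signs $\prod_s(-1)^{\bar{k}_s(\bar{k}_s+\bar{t}_s)}$, the Koszul signs among the slots of $I$, the factor $(-1)^{(\bar{\mathbf{i}}+\bar{\mathbf{j}})\bar{\mathbf{k}}}$ from $\overline{\otimes}$ acting on the $I$-part of $\overline{v_{\mathbf{k},I}}$, and $\alpha(\epsilon_{\mathbf{i}}+\epsilon_{\mathbf{j}},\epsilon_{\mathbf{i}}+\epsilon_{\mathbf{j}})$ are literally the signs of the $V^{\otimes l}$ case (with $\tilde{T}$ replaced by $\tilde{S}$), whose net effect is $\sigma(\mathbf{i},\mathbf{j};\mathbf{k},\mathbf{t})\alpha(\epsilon_{\mathbf{k}}+\epsilon_{\mathbf{t}},\epsilon_{\mathbf{t}})$. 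The only new contributions involve $\bar{v}$: a Koszul sign $(-1)^{\bar{v}\sum_s(\bar{k}_s+\bar{t}_s)N_s}$, where $N_s=\lvert\{q\notin I:q>j_s\}\rvert$, from moving the $I$-coefficients past the $v$'s on their right, and a factor $(-1)^{\bar{v}(r-l)\sum_s(\bar{k}_s+\bar{t}_s)}$ from the $v^{\otimes(r-l)}$ part of $\overline{v_{\mathbf{k},I}}$ (using $\bar{\mathbf{i}}+\bar{\mathbf{j}}=\sum_s(\bar{k}_s+\bar{t}_s)$). Putting $M_s=\lvert\{q\notin I:q<j_s\}\rvert$ and using $N_s+M_s=r-l$, one gets $N_s+(r-l)\equiv M_s\pmod 2$, so the product of these two new factors is $(-1)^{\bar{v}\sum_s(\bar{k}_s+\bar{t}_s)M_s}$. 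On the other hand, since $\epsilon_{\mathbf{k},I}+\epsilon_{\mathbf{t},I}$ vanishes outside $I$, expanding $\alpha(\epsilon_{\mathbf{k},I}+\epsilon_{\mathbf{t},I},\epsilon_{\mathbf{t},I})$ and separating the $I$--$I$ pairs from the (outside)--$I$ pairs shows it equals $\alpha(\epsilon_{\mathbf{k}}+\epsilon_{\mathbf{t}},\epsilon_{\mathbf{t}})\cdot(-1)^{\bar{v}\sum_s(\bar{k}_s+\bar{t}_s)M_s}$. The two computations agree, which is exactly the claimed identity; note this matching holds verbatim for either parity of $v$, giving the asserted independence.

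Finally, for the ``furthermore'' part I would specialize to $I=\underline{l}$. Then no slot outside $I$ precedes any slot of $I$, so $M_s=0$ for every $s$ and $\alpha(\epsilon_{\mathbf{k},\underline{l}}+\epsilon_{\mathbf{t},\underline{l}},\epsilon_{\mathbf{t},\underline{l}})=\alpha(\epsilon_{\mathbf{k}}+\epsilon_{\mathbf{t}},\epsilon_{\mathbf{t}})$; comparing the resulting sum with the $V^{\otimes l}$ formula identifies it with $\xi_{\mathbf{i},\mathbf{j}}(v_{\mathbf{t}})\otimes v^{\otimes(r-l)}$.
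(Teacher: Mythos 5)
Your proof is correct and follows the same route as the paper: both compute $\xi_{\mathbf{i},\mathbf{j},k}v_{\mathbf{t},I}=(\mathrm{id}\,\overline{\otimes}\,\xi_{\mathbf{i},\mathbf{j},k})\eta(v_{\mathbf{t},I})$ by expanding the enhanced comodule map and invoking the defining conditions of $\xi_{\mathbf{i},\mathbf{j},k}$ to produce $\delta_{kl}$, the $\sigma$-factor, and the $\alpha$-factor in the enhanced indices. The only difference is presentational: where the paper asserts the formula for $\eta(v_{\mathbf{t},I})$ ``by a direct computation,'' you actually carry out that sign verification, reducing it to the known $V^{\otimes l}$ case via the $M_s$, $N_s$ counting, which is a sound (and welcome) filling-in of the same argument.
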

\begin{proof}
	By a direct computation, we know the structure map $\mathbf{\eta}: \underline{V}^{\otimes r}\longrightarrow\underline{V}^{\otimes r}\otimes A'(m|n,r)$ satisfies
	$$\mathbf{\eta}(v_{\mathbf{t},I})=\sum_{\mathbf{k}\in I(m\mid n,l)}(-1)^{(\overline{\mathbf{k}}+(r-l)\cdot \bar{v})(\overline{\mathbf{k}}+\overline{\mathbf{t}})}\alpha(\epsilon_{\mathbf{k},I}+\epsilon_{\mathbf{t},I},\epsilon_{\mathbf{k},I})v_{\mathbf{k},I}\otimes \tilde S_{\mathbf{k},\mathbf{t}}S^{r-l}.$$
	Then we get
	\begin{align*}
		\xi_{\mathbf{i},\mathbf{j},k}v_{\mathbf{t},I}
		&=(id\overline{\otimes}\xi_{\mathbf{i},\mathbf{j},k})\mathbf{\eta}(v_{\mathbf{t},I})\\
		&=\sum_{\mathbf{k}\in I(m|n,l)}(-1)^{(\overline{\mathbf{k}}+(r-l)\cdot \bar{v})(\overline{\mathbf{k}}+\overline{\mathbf{t}})}(-1)^{(\overline{\mathbf{i}}+\overline{\mathbf{j}})(\overline{\mathbf{k}}+(r-l)\bar{v})}\\
		 &\qquad\alpha(\epsilon_{\mathbf{k},I}+\epsilon_{\mathbf{t},I},\epsilon_{\mathbf{k},I})v_{\mathbf{k},I}\xi_{\mathbf{i},\mathbf{j},k}(\tilde S_{\mathbf{k},\mathbf{t}}S^{r-l})\\
		&=\delta_{kl}\sum_{(\mathbf{k},\mathbf{t})\sim (\mathbf{i},\mathbf{j})}\sigma(\mathbf{i},\mathbf{j};\mathbf{k},\mathbf{t})\alpha(\epsilon_{\mathbf{k},I}+\epsilon_{\mathbf{t},I},\epsilon_{\mathbf{t},I})v_{\mathbf{k},I}.\\
	\end{align*}		
\end{proof}
Recall that $e_{ij}$ is the $(i,j)$-matrix unit.
Let $$e'_{ij}=
\begin{cases}
	&e_{ij}, \text{ if } 1\leq i,j\leq m \\
	&e_{i,j-m},\text{ if } 1\leq i\leq m \text{ and } m+1\leq j\leq m+n\\
	&e_{i-m,j}, \text{ if } m+1\leq i\leq m+n \text{ and } 1\leq j\leq m\\
	&e_{i-m,j-m}, \text{ if }  m+1\leq i,j\leq m+n
\end{cases}$$
and $e':=e_{m+1,m+1}$.
For $\mathbf{i},\mathbf{j}\in I(m|n,l)$ and $I\subset\underline{r}$ with $|I|=l$, define $e_{\mathbf{i},\mathbf{j},I}=e''_{1}\otimes e''_{2}\otimes...\otimes e''_{r}$ where $e''_k=e'_{i_k,j_k}$ if $k\in I$ and $e''_k=e'=e_{m+1,m+1}$ if $k\notin I$.
Then $e_{\mathbf{i},\mathbf{j},I}v_{\mathbf{k},J}
=\delta_{\mathbf{j},\mathbf{\mathbf{k}}}\delta_{IJ}\alpha(\epsilon_{\mathbf{i},I}+\epsilon_{\mathbf{j},I},\epsilon_{\mathbf{\mathbf{k}},I})v_{\mathbf{i},I}$ and $e_{\mathbf{i},\mathbf{j},I}e_{\mathbf{k},\mathbf{l},J}=\delta_{IJ}\delta_{\mathbf{j},\mathbf{k}}\alpha(\epsilon_{\mathbf{i}}+\epsilon_{\mathbf{j}},\epsilon_{\mathbf{k}}+\epsilon_{\mathbf{l}})e_{\mathbf{i},\mathbf{l},I}$.

\begin{lemma}\label{LLe}
	The representation $\rho_r:S'(m|n,r)\rightarrow \mathrm{End}_\mathbb{K}(\underline{V}^{\otimes r})$ is faithful and satisfies
	$$\rho_r(\xi_{\mathbf{i},\mathbf{j},l})=\sum_{I\subset\underline{r}\atop |I|=l}\sum_{(\mathbf{\mathbf{k}},\mathbf{l})\sim (\mathbf{i},\mathbf{j})}\sigma(\mathbf{i},\mathbf{j};\mathbf{k},\mathbf{l})e_{\mathbf{k},\mathbf{l},I}$$
	for each $(\mathbf{i},\mathbf{j})\in I^2(m|n,l)$ with $1\leq l\leq r$.
	Moreover, for $(\mathbf{k},\mathbf{l}),(\mathbf{i},\mathbf{j})\in I^2(m|n,l)$,
	$$\xi_{\mathbf{i},\mathbf{j},k}\xi_{\mathbf{k},\mathbf{l},l}=
	\delta_{kl}\sum_{(\mathbf{s},\mathbf{t})\in \Omega(m|n,l)}a_{\mathbf{i},\mathbf{j},\mathbf{k},\mathbf{l},\mathbf{s},\mathbf{t}}
	\xi_{\mathbf{s},\mathbf{t},l}$$
	where $a_{\mathbf{i},\mathbf{j},\mathbf{k},\mathbf{l},\mathbf{s},\mathbf{t}}=
	\sum \sigma(\mathbf{i},\mathbf{j};\mathbf{s},\mathbf{h})
	\sigma(\mathbf{k},\mathbf{l};\mathbf{h},\mathbf{t})
	\alpha(\epsilon_{\mathbf{s}}+\epsilon_{\mathbf{h}},\epsilon_{\mathbf{h}}+\epsilon_{\mathbf{t}})$
	summing over all $\mathbf{h}\in I(m|n,l)$ with $(\mathbf{s},\mathbf{h})\sim (\mathbf{i},\mathbf{j}),(\mathbf{h},\mathbf{t})\sim (\mathbf{k},\mathbf{l})$.
\end{lemma}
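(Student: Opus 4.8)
The plan is to prove the three assertions in turn, reducing everything to the module action computed in Proposition \ref{pro4.2} together with the matrix-unit formulas recorded just before the statement, and ultimately to the classical Lemma \ref{Lemma5.1}.

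First I would establish the formula for $\rho_r(\xi_{\mathbf{i},\mathbf{j},l})$ by checking that both sides act identically on an arbitrary basis vector $v_{\mathbf{t},J}$ of $\underline{V}^{\otimes r}$. On the left, Proposition \ref{pro4.2} gives $\xi_{\mathbf{i},\mathbf{j},l}v_{\mathbf{t},J}$, which vanishes unless $|J|=l$ and otherwise equals $\sum_{(\mathbf{k},\mathbf{t})\sim(\mathbf{i},\mathbf{j})}\sigma(\mathbf{i},\mathbf{j};\mathbf{k},\mathbf{t})\alpha(\epsilon_{\mathbf{k},J}+\epsilon_{\mathbf{t},J},\epsilon_{\mathbf{t},J})v_{\mathbf{k},J}$. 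On the right, the action $e_{\mathbf{k},\mathbf{l},I}v_{\mathbf{t},J}=\delta_{\mathbf{l},\mathbf{t}}\delta_{IJ}\alpha(\epsilon_{\mathbf{k},I}+\epsilon_{\mathbf{l},I},\epsilon_{\mathbf{t},I})v_{\mathbf{k},I}$ collapses the sum over $I$ to $I=J$ (forcing $|J|=l$) and the sum over $(\mathbf{k},\mathbf{l})$ to those with $\mathbf{l}=\mathbf{t}$, yielding precisely the same expression. Since an operator is determined by its action on the basis, this proves the displayed formula.

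Next, for faithfulness I would exploit the grading $\underline{V}^{\otimes r}=\bigoplus_{l}\underline{V}_l^{\otimes r}$: each $\rho_r(\xi_{\mathbf{i},\mathbf{j},l})$ preserves this decomposition and is supported on the $l$-th graded piece, while $\rho_r(\xi_0)$ is the projection onto $\underline{V}_0^{\otimes r}=\mathbb{K}v^{\otimes r}$. Hence any element of the kernel must have its components vanish piece by piece. Restricting a fixed $\xi_{\mathbf{i},\mathbf{j},l}$ to the summand $\underline{V}_{\underline l}^{\otimes r}$ and invoking the last statement of Proposition \ref{pro4.2}, namely $\xi_{\mathbf{i},\mathbf{j},l}v_{\mathbf{t},\underline l}=\xi_{\mathbf{i},\mathbf{j}}(v_{\mathbf{t}})\otimes v^{\otimes(r-l)}$, identifies this action with the faithful action of $S(m|n,l)$ on $V^{\otimes l}$ from Lemma \ref{Lemma5.1}. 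The linear independence there forces the coefficients to vanish, and separating $\xi_0$ as the only generator supported in degree $0$ completes the argument.

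Finally, for the multiplication formula I would compute $\rho_r(\xi_{\mathbf{i},\mathbf{j},k})\rho_r(\xi_{\mathbf{k},\mathbf{l},l})$ using the formula just proved and the matrix-unit product $e_{\mathbf{a},\mathbf{b},I}e_{\mathbf{c},\mathbf{d},J}=\delta_{IJ}\delta_{\mathbf{b},\mathbf{c}}\alpha(\epsilon_{\mathbf{a}}+\epsilon_{\mathbf{b}},\epsilon_{\mathbf{c}}+\epsilon_{\mathbf{d}})e_{\mathbf{a},\mathbf{d},I}$. The factor $\delta_{IJ}$ forces $k=l$ (producing the $\delta_{kl}$) and $\mathbf{b}=\mathbf{c}$; since the surviving $\alpha$-factor depends only on the parities and not on the common index set $I$, the computation inside each fixed $I$ is verbatim the classical one from Lemma \ref{Lemma5.1}, so collecting the resulting matrix units by the orbit of $(\mathbf{a},\mathbf{d})$ reproduces the structure constants $a_{\mathbf{i},\mathbf{j},\mathbf{k},\mathbf{l},\mathbf{s},\mathbf{t}}$. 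Summing over all $I$ with $|I|=l$ then reassembles $\sum_{(\mathbf{s},\mathbf{t})}a_{\mathbf{i},\mathbf{j},\mathbf{k},\mathbf{l},\mathbf{s},\mathbf{t}}\,\rho_r(\xi_{\mathbf{s},\mathbf{t},l})$, and faithfulness of $\rho_r$ upgrades this operator identity to the claimed identity in $S'(m|n,r)$. I expect the main obstacle to be the sign bookkeeping in this last step: one must apply the cocycle relation for $\sigma$ together with the $\alpha$--$\gamma$ identity displayed in the Notions subsection to verify that the coefficient of each $e_{\mathbf{a},\mathbf{d},I}$ obtained directly agrees with the one coming from $a_{\mathbf{i},\mathbf{j},\mathbf{k},\mathbf{l},\mathbf{s},\mathbf{t}}\,\sigma(\mathbf{s},\mathbf{t};\mathbf{a},\mathbf{d})$, and to confirm that $a_{\mathbf{i},\mathbf{j},\mathbf{k},\mathbf{l},\mathbf{s},\mathbf{t}}$ is independent of both the chosen orbit representative and of $I$.
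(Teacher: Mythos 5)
Your proposal is correct and follows essentially the same route as the paper: compute the action via Proposition~\ref{pro4.2}, match it with sums of the matrix units $e_{\mathbf{k},\mathbf{l},I}$, derive the product formula from $e_{\mathbf{i},\mathbf{j},I}e_{\mathbf{k},\mathbf{l},J}=\delta_{IJ}\delta_{\mathbf{j},\mathbf{k}}\alpha(\epsilon_{\mathbf{i}}+\epsilon_{\mathbf{j}},\epsilon_{\mathbf{k}}+\epsilon_{\mathbf{l}})e_{\mathbf{i},\mathbf{l},I}$ (with $\delta_{IJ}$ yielding $\delta_{kl}$), and use faithfulness to upgrade the operator identity to $S'(m|n,r)$. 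Your only deviation is cosmetic: for faithfulness the paper simply observes that the images $\rho_r(\xi_0)$ and $\rho_r(\xi_{\mathbf{i},\mathbf{j},l})$ are linearly independent by the form of the matrix units, whereas you reduce to the known faithfulness of $S(m|n,l)$ on $V^{\otimes l}$ via the restriction $\xi_{\mathbf{i},\mathbf{j},l}v_{\mathbf{t},\underline{l}}=\xi_{\mathbf{i},\mathbf{j}}(v_{\mathbf{t}})\otimes v^{\otimes(r-l)}$ --- both work.
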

\begin{proof}
	By Proposition \ref{pro4.2},
	\begin{align*}
	\rho_r(\xi_{\mathbf{i},\mathbf{j},k})v_{\mathbf{l},I}&=\delta_{k,|I|}\sum_{(\mathbf{k},\mathbf{l})\sim (\mathbf{i},\mathbf{j})}\sigma(\mathbf{i},\mathbf{j};\mathbf{k},\mathbf{l})\alpha(\epsilon_{\mathbf{k},I}+\epsilon_{\mathbf{l},I},\epsilon_{\mathbf{l},I})v_{\mathbf{k},I}\\
	&=\delta_{k,|I|}\sum_{(\mathbf{k},\mathbf{l})\sim (\mathbf{i},\mathbf{j})}\sigma(\mathbf{i},\mathbf{j};\mathbf{k},\mathbf{l})e_{\mathbf{k},\mathbf{l},I}v_{\mathbf{l},I}\\
	&=\sum_{J\subset\underline{r},\atop |J|=k}\sum_{(\mathbf{k},\mathbf{l})\sim (\mathbf{i},\mathbf{j})}\sigma(\mathbf{i},\mathbf{j};\mathbf{k},\mathbf{l})e_{\mathbf{k},\mathbf{l},J}v_{\mathbf{l},I}.
	\end{align*}
	So $\rho_r(\xi_{\mathbf{i},\mathbf{j},k})=\sum_{J\subset\underline{r},\atop |J|=k}\sum_{(\mathbf{k},\mathbf{l})\sim (\mathbf{i},\mathbf{j})}\sigma(\mathbf{i},\mathbf{j};\mathbf{k},\mathbf{l})e_{\mathbf{k},\mathbf{l},J}.$
	
	Moreover,
		$$\rho_r(\xi_0)v_{\mathbf{l},I}=\sum_{\mathbf{k}\in I(m|n,l)}(-1)^{(\overline{\mathbf{k}}+(r-l)\cdot \bar{v})(\overline{\mathbf{k}}+\overline{\mathbf{l}})}
		 \alpha(\epsilon_{\mathbf{k},I}+\epsilon_{\mathbf{l},I},\epsilon_{\mathbf{k},I})v_{\mathbf{k},I}\xi_{0}(\tilde S_{\mathbf{k},\mathbf{l}}S^{r-l})=0$$
and
	$$\rho_r(\xi_0)v^{\otimes r}=(id\overline{\otimes}\xi_0)\mathbf{\eta}(v^{\otimes r})
	=(id\overline{\otimes}\xi_0)(v^{\otimes r}\otimes S^r)
	=v^{\otimes r}\xi_0(S^r)
	=v^{\otimes r}.$$
Therefore $\rho_r(\xi_0)=(e')^{\otimes r}$.

	According to the definition of $e_{\mathbf{k},\mathbf{l},J}$, we know that $\{\rho_r(\xi_0),\rho_r(\xi_{\mathbf{i},\mathbf{j},k})\mid 1\leq k\leq r,\mathbf{i},\mathbf{j}\in\Omega(m|n,l)\}$ are linearly independent. Therefore $\rho_r$ is faithful.
	
		Since $e_{\mathbf{i},\mathbf{j},I}e_{\mathbf{k},\mathbf{l},J}=\delta_{IJ}\delta_{\mathbf{j},\mathbf{k}}\alpha(\epsilon_{\mathbf{i}}+\epsilon_{\mathbf{j}},\epsilon_{\mathbf{k}}+\epsilon_{\mathbf{l}})e_{\mathbf{i},\mathbf{l},I}$ and $\rho_r$ is faithful, it follows that $\xi_{\mathbf{i},\mathbf{j},k}\xi_{\mathbf{k},\mathbf{l},l}=0$ for $k\neq l$.
If $k=l$, then
\begin{align*}
	&\rho_r(\xi_{\mathbf{i},\mathbf{j},l})\rho_r(\xi_{\mathbf{k},\mathbf{l},l})\\
	&=\sum_{J\subset\underline{r}\atop\mid J\mid =l}\sum_{I\subset\underline{r}\atop\mid I\mid =l}\sum_{(\mathbf{s},\mathbf{h})\sim (\mathbf{i},\mathbf{j})\atop(\mathbf{h}',\mathbf{t})\sim (\mathbf{k},\mathbf{l})}\sigma(\mathbf{i},\mathbf{j};\mathbf{s},\mathbf{h})\sigma(\mathbf{k},\mathbf{l};\mathbf{h}',\mathbf{t})e_{\mathbf{s},\mathbf{h},I}e_{\mathbf{h}',\mathbf{t},I}\\
	&=\sum_{I\subset\underline{r}\atop\mid I\mid =l}\sum_{(\mathbf{s},\mathbf{h})\sim (\mathbf{i},\mathbf{j})\atop(\mathbf{h},\mathbf{t})\sim (\mathbf{k},\mathbf{l})}\sigma(\mathbf{i},\mathbf{j};\mathbf{s},\mathbf{h})\sigma(\mathbf{k},\mathbf{l};\mathbf{h},\mathbf{t})\alpha(\epsilon_{\mathbf{s}}+\epsilon_{\mathbf{h}},\epsilon_{\mathbf{h}}+\epsilon_{\mathbf{t}})e_{\mathbf{s},\mathbf{t},I}\\
	&=\sum_{(\mathbf{s},\mathbf{t})\in \Omega(m\mid n,l)}\sum_{I\subset\underline{r}\atop\mid I\mid =l}
	\sum_{(\mathbf{s}',\mathbf{t}')\sim(\mathbf{s},\mathbf{t})\atop{(\mathbf{s}',\mathbf{h}')\sim (\mathbf{i},\mathbf{j})\atop(\mathbf{h}',\mathbf{t}')\sim (\mathbf{k},\mathbf{l})}}
	 \sigma(\mathbf{i},\mathbf{j};\mathbf{s}',\mathbf{h}')\sigma(\mathbf{k},\mathbf{l};\mathbf{h}',\mathbf{t}')
	 \alpha(\epsilon_{\mathbf{s}'}+\epsilon_{\mathbf{h}'},\epsilon_{\mathbf{h}'}+\epsilon_{\mathbf{t}'})e_{\mathbf{s}',\mathbf{t}',I}\\
	&=\sum_{(\mathbf{s},\mathbf{t})\in \Omega(m\mid n,l)}
	\sum_{(\mathbf{s},\mathbf{h})\sim (\mathbf{i},\mathbf{j})\atop(\mathbf{h},\mathbf{t})\sim (\mathbf{k},\mathbf{l})}
	 \sigma(\mathbf{i},\mathbf{j};\mathbf{s},\mathbf{h})\sigma(\mathbf{k},\mathbf{l};\mathbf{h},\mathbf{t})
	\alpha(\epsilon_{\mathbf{s}}+\epsilon_{\mathbf{h}},\epsilon_{\mathbf{h}}+\epsilon_{\mathbf{t}})\\
	&\qquad\sum_{I\subset\underline{r}\atop\mid I\mid =l}\sum_{(\mathbf{s}',\mathbf{t}')\sim(\mathbf{s},\mathbf{t})}
	\sigma(\mathbf{s},\mathbf{t};\mathbf{s}',\mathbf{t}')e_{\mathbf{s}',\mathbf{t}',I}\\
	&=\sum_{(\mathbf{s},\mathbf{t})\in \Omega(m\mid n,l)}
	 a_{\mathbf{i},\mathbf{j},\mathbf{k},\mathbf{l},\mathbf{s},\mathbf{t}}\rho_r(\xi_{\mathbf{s},\mathbf{t},l}).
\end{align*}
Since $\rho_r$ is faithful, it follows that $\xi_{\mathbf{i},\mathbf{j},l}\xi_{\mathbf{k},\mathbf{l},l}=
\sum_{(\mathbf{s},\mathbf{t})\in \Omega(m\mid n,l)}a_{\mathbf{i},\mathbf{j},\mathbf{k},\mathbf{l},\mathbf{s},\mathbf{t}}
\xi_{\mathbf{s},\mathbf{t},l}$.

\end{proof}

According to Lemma \ref{LLe} and Lemma \ref{Lemma5.1}, we know that $S'(m|n,r)$ is a super subalgebra of $S(m+1|n,r)$ or $S(m|n+1,r)$.
We also can embed $S(m|n,l)$ into $S'(m|n,r)$ by $\xi_{\mathbf{i},\mathbf{j}}\longmapsto\xi_{\mathbf{i},\mathbf{j},l}$ for all $(\mathbf{i},\mathbf{j})\in I^2(m|n,l)$, and denote the injective homomorphism by $\alpha_l$.
Let $S(m|n,0):=\mathbb{K}\xi_0$ and $\alpha_0(\xi_0)=\xi_0$,
then we have $S'(m|n,r)=\sum_{l=0}^r\alpha_l(S(m|n,l))\cong\sum_{l=0}^rS(m|n,l)$.
The action of $S(m|n,l)$ on $\underline{V}^{\otimes r}$ is induced by $\alpha_l$.
By Proposition \ref{pro4.2}, we know that $S(m|n,l)(\underline{V}^{\otimes r}_I)\subset \underline{V}^{\otimes r}_I$
and $S(m|n,l)(\underline{V}^{\otimes r}_k)=0$ for all $k\neq l$.

\subsection{Duality between Schur superalgebras $S(m|n,r)$ and group algebras $\mathbb{K}\mathfrak{S}_r$}

Denote by $\pi_r$ the representation of symmetric group $\mathfrak{S}_r$ on $V^{\otimes r}$ by letting
$$v_{\mathbf{i}}(j,j+1)=(-1)^{\bar{i}_{j}\bar{i}_{j+1}}v_{i_1}\otimes...\otimes v_{i_{j+1}}\otimes v_{i_j}\otimes...\otimes v_{i_r}$$
for each $\mathbf{i}\in I(m|n,r)$ and each $1\leq j<r$.
For arbitrary $w\in \mathfrak{S}_r$, we have that
$$v_{\mathbf{i}}\cdot w=\gamma(\epsilon_{\mathbf{i}},w)v_{\mathbf{i}\cdot w}.$$

\begin{theorem}\label{theorem5.2}
	$(1)$ $($\cite[Theorem 5.2]{B-K2002}$)$ $\rho_r:S(m|n,r)\rightarrow \mathrm{End}_{\mathbb{K}\mathfrak{S}_r}(V^{\otimes r})$ is an isomorphism.
	
	$(2)$ $($\cite[Remark 5.8(iii)]{B-K2002}$)$ $\mathrm{End}_{S(m\mid n,r)}(V^{\otimes r})\cong \mathbb{K}\mathfrak{S}_r$ when $r\leq m+n$.
\end{theorem}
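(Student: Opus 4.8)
The plan is to prove the two parts by the classical Schur--Weyl strategy, adapted to the super and modular setting. For part $(1)$ I would first verify the easy inclusion $\rho_r(S(m|n,r))\subseteq\mathrm{End}_{\mathbb{K}\mathfrak{S}_r}(V^{\otimes r})$ by checking that the two actions commute, and then promote it to an equality by a dimension count using the faithfulness already recorded in Lemma \ref{Lemma5.1}. For part $(2)$ I would establish injectivity of $\pi_r$ under the hypothesis $r\leq m+n$ and then prove the reverse (bicommutant) inclusion; the latter is the genuine difficulty.

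For the commuting in part $(1)$, I would use the description $\rho_r(\xi_{\mathbf{i},\mathbf{j}})=\sum_{(\mathbf{k},\mathbf{l})\sim(\mathbf{i},\mathbf{j})}\sigma(\mathbf{i},\mathbf{j};\mathbf{k},\mathbf{l})e_{\mathbf{k},\mathbf{l}}$ from Lemma \ref{Lemma5.1}: the coefficient pattern is constant along the diagonal $\mathfrak{S}_r$-orbit up to the sign $\sigma$, and the signed action $v_{\mathbf{i}}\cdot w=\gamma(\epsilon_{\mathbf{i}},w)v_{\mathbf{i}\cdot w}$ permutes the matrix units $e_{\mathbf{k},\mathbf{l}}$ compatibly, so a direct computation yields $\rho_r(\xi)\pi_r(w)=\pi_r(w)\rho_r(\xi)$. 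For the dimension count I would expand an arbitrary $\phi=\sum c_{\mathbf{k},\mathbf{l}}e_{\mathbf{k},\mathbf{l}}\in\mathrm{End}_{\mathbb{K}}(V^{\otimes r})$ and translate $\phi\pi_r(w)=\pi_r(w)\phi$ into the relations $c_{\mathbf{k}\cdot w,\mathbf{l}\cdot w}=\pm c_{\mathbf{k},\mathbf{l}}$, with signs supplied by $\gamma$. On an orbit carrying a repeated odd column (a non-strict pair) the sign is $-1$, forcing $c_{\mathbf{k},\mathbf{l}}=0$, so the surviving free parameters are indexed exactly by $\Omega(m|n,r)$. Hence $\dim\mathrm{End}_{\mathbb{K}\mathfrak{S}_r}(V^{\otimes r})=|\Omega(m|n,r)|=\dim S(m|n,r)$, and faithfulness of $\rho_r$ upgrades the inclusion to the isomorphism of $(1)$.

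For part $(2)$, injectivity of $\pi_r$ when $r\leq m+n$ follows by choosing $\mathbf{i}\in I(m|n,r)$ with pairwise distinct entries, which is possible precisely because there are $m+n$ basis vectors. Then the vectors $\pi_r(w)v_{\mathbf{i}}=\gamma(\epsilon_{\mathbf{i}},w)v_{\mathbf{i}\cdot w}$ are nonzero scalar multiples of pairwise distinct basis vectors, hence linearly independent, so $\pi_r$ embeds $\mathbb{K}\mathfrak{S}_r$ and $\dim\pi_r(\mathbb{K}\mathfrak{S}_r)=r!$. It remains to show $\mathrm{End}_{S(m|n,r)}(V^{\otimes r})\subseteq\pi_r(\mathbb{K}\mathfrak{S}_r)$, equivalently that the bicommutant of $\pi_r(\mathbb{K}\mathfrak{S}_r)$ is no larger than $\pi_r(\mathbb{K}\mathfrak{S}_r)$ itself.

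The hard part is exactly this reverse inclusion in characteristic $p$: every endomorphism commuting with the supergroup action must be a linear combination of signed permutations. This is the surjectivity half of the duality, i.e. the super first fundamental theorem of invariant theory for $\mathrm{GL}(m|n)$, and the naive double-commutant theorem does not apply because $\mathbb{K}\mathfrak{S}_r$ need not be semisimple and $V^{\otimes r}$ need not be a semisimple $\mathfrak{S}_r$-module. The route I would take is characteristic-free: exhibit a common lattice $V_{\mathbb{Z}[\frac12]}^{\otimes r}$ stable under both actions and prove surjectivity of the integral Schur--Weyl map over $\mathbb{Z}[\tfrac12]$, using Sergeev's semisimple duality over the fraction field (see \cite{C-W2012}) to pin down the generic rank at $r!$; since surjectivity is preserved under $\otimes\mathbb{K}$ by right-exactness of base change, it descends to $\mathbb{K}$. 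Alternatively, one invokes that $V^{\otimes r}$ is a tilting module for $S(m|n,r)$ and applies the tilting-theoretic form of the double centralizer property. Either way, combined with the injectivity of $\pi_r$ for $r\leq m+n$, this yields $\mathbb{K}\mathfrak{S}_r\cong\mathrm{End}_{S(m|n,r)}(V^{\otimes r})$, as claimed in \cite{B-K2002}.
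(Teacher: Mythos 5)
The first thing to note is that the paper contains no proof of this theorem: both parts are imported verbatim from \cite{B-K2002} (Theorem 5.2 and Remark 5.8(iii)), so your proposal can only be measured against that source. Your part $(1)$ is essentially the argument given there: verify the commutation, expand an arbitrary $\phi=\sum c_{\mathbf{k},\mathbf{l}}e_{\mathbf{k},\mathbf{l}}$, translate the commutation into $c_{\mathbf{k}\cdot w,\mathbf{l}\cdot w}=\gamma(\epsilon_{\mathbf{k}},w)\gamma(\epsilon_{\mathbf{l}},w)\,c_{\mathbf{k},\mathbf{l}}$, note that a non-strict pair is fixed by a transposition acting with sign $-1$, which kills its coefficient, so the commutant has dimension $|\Omega(m|n,r)|$, and conclude via the faithfulness of $\rho_r$ from Lemma \ref{Lemma5.1}. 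That is correct, as is your injectivity of $\pi_r$ for $r\leq m+n$ using an index with pairwise distinct entries.

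The genuine gap is in your main route for the surjectivity half of part $(2)$. Sergeev duality over the fraction field pins down the \emph{generic} rank, but it does not give surjectivity of the integral map $\mathbb{Z}[1/2]\mathfrak{S}_r\rightarrow\mathrm{End}_{S(m|n,r)_{\mathbb{Z}[1/2]}}(V^{\otimes r}_{\mathbb{Z}[1/2]})$: the cokernel could be torsion, invisible over the fraction field and concentrated exactly at the prime $p$ you care about. Worse, even granting integral surjectivity, right-exactness of $-\otimes\mathbb{K}$ only yields surjectivity onto the image of $\mathrm{End}_{S(m|n,r)_{\mathbb{Z}[1/2]}}(V^{\otimes r}_{\mathbb{Z}[1/2]})\otimes\mathbb{K}$ inside $\mathrm{End}_{S(m|n,r)}(V^{\otimes r})$; you would still need that formation of the endomorphism algebra commutes with base change, i.e.\ that $\dim_{\mathbb{K}}\mathrm{End}_{S(m|n,r)}(V^{\otimes r})$ does not jump at $p$ --- and that is precisely the statement to be proved, not a formal consequence of right-exactness. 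Your tilting-module alternative is a legitimate repair in principle, but it imports exactly the machinery whose super, characteristic-$p$ form would itself need justification.

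What actually closes this, and what lies behind \cite[Remark 5.8(iii)]{B-K2002}, is an elementary characteristic-free idempotent argument that uses $r\leq m+n$ structurally and not merely for injectivity. Take $\mathbf{u}=(1,2,\dots,r)$, possible since $r\leq m+n$. The pair $(\mathbf{u},\mathbf{u})$ is strict, and the idempotent $\xi_{\mathbf{u},\mathbf{u}}\in S(m|n,r)$ projects onto the span of the vectors $v_{\mathbf{u}\cdot w}$, $w\in\mathfrak{S}_r$, which via $w\mapsto v_{\mathbf{u}}\cdot w=\gamma(\epsilon_{\mathbf{u}},w)v_{\mathbf{u}\cdot w}$ is isomorphic to the regular right $\mathbb{K}\mathfrak{S}_r$-module. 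Since $\mathbf{u}$ has distinct entries, $(\mathbf{j},\mathbf{u})$ is strict for every $\mathbf{j}\in I(m|n,r)$ and $\xi_{\mathbf{j},\mathbf{u}}v_{\mathbf{u}}$ is a nonzero multiple of $v_{\mathbf{j}}$, so this weight space generates $V^{\otimes r}$ over $S(m|n,r)$; hence restriction to it embeds $\mathrm{End}_{S(m|n,r)}(V^{\otimes r})$ into $\mathrm{End}_{\mathbb{K}\mathfrak{S}_r}(\mathbb{K}\mathfrak{S}_r)\cong\mathbb{K}\mathfrak{S}_r$, and this embedding is inverse to $\pi_r$. No semisimplicity, integral lattices, or tilting theory is required.
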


\section{Weak degenerate double Hecke algebras}
In this section, we will introduce weak degenerate double Hecke algebras. The definition of weak degenerate double Hecke algebras is a weak version of degenerate double Hecke algebras introduced in \cite[section 3]{B-Y-Y2020} with reducing two relations of generators.
\subsection{Weak degenerate double Hecke algebras}
Let $s_i=(i,i+1)\in \mathfrak{S}_r$ for $i=1,...,r-1$.
\begin{definition}
	The $l$th weak degenerate double Hecke algebra $\underline{\mathcal{H}}^{l}_{r}$ of $\mathfrak{S}_r$ is an infinite dimensional associative algebra with generators $\{\mathbf{x}_\sigma\mid\sigma\in \mathfrak{S}_l\}\cup\{\mathbf{s}_i\mid i=1,2,...,r-1\}$ and the following relations
	\begin{equation}\label{h1}
		\mathbf{s}_i^2=1, \mathbf{s}_i\mathbf{s}_j=\mathbf{s}_j\mathbf{s}_i \text{ for }1\leq i\neq j\leq r-1,|j-i|>1;
	\end{equation}
	\begin{equation}\label{h2}
		\mathbf{s}_i\mathbf{s}_j\mathbf{s}_i=\mathbf{s}_j\mathbf{s}_i\mathbf{s}_j \text{ for }1\leq i\neq j\leq r-1,|j-i|=1;
	\end{equation}
	\begin{equation}\label{h3}
		\mathbf{x}_\sigma\mathbf{x}_\mu=\mathbf{x}_{\sigma\circ\mu} \text{ for }\sigma,\mu\in \mathfrak{S}_l;
	\end{equation}
	\begin{equation}\label{h4}
		 \mathbf{s}_i\mathbf{x}_\sigma=\mathbf{x}_{s_i\circ\sigma},\mathbf{x}_\sigma\mathbf{s}_i=\mathbf{x}_{\sigma\circ s_i}\text{ for }\sigma\in\mathfrak{S}_l,i<l;
	\end{equation}
	\begin{equation}\label{h5}
		\mathbf{s}_i\mathbf{x}_\sigma=\mathbf{x}_\sigma\mathbf{s}_i\text{ for }\sigma\in\mathfrak{S}_l,i>l.
	\end{equation}
\end{definition}
The above definition comes from \cite[3.1.1]{B-Y-Y2020} with a bit change that (\ref{h5}) in \cite[3.1.1]{B-Y-Y2020} is $\mathbf{s}_i\mathbf{x}_\sigma=\mathbf{x}_\sigma=\mathbf{x}_\sigma\mathbf{s}_i$.
\begin{definition}$($\cite[Definition 3.1]{B-Y-Y2020}$)$
	The weak degenerate double Hecke algebra $\underline{\mathcal{H}}_{r}$ of $\mathfrak{S}_r$ is an associative algebra with generators $\mathbf{s}_i(i=1,...,r-1)$, and $\mathbf{x}_\sigma^{(l)}$ for $\sigma\in \mathfrak{S}_l,l=0,1,...,r$
	and with relations as (\ref{h1})-(\ref{h5}) in which $\mathbf{x}_\sigma$, $\mathbf{x}_\mu$ are replaced by $\mathbf{x}_\sigma^{(l)}$, $\mathbf{x}_\mu^{(l)}$,
	and additional ones:
	\begin{equation}\label{h6}
		\mathbf{x}_\delta^{(l)}\mathbf{x}_\gamma^{(k)}=0\text{ for }\delta\in\mathfrak{S}_l, \gamma\in \mathfrak{S}_k\text{ with }k\neq l.
	\end{equation}
\end{definition}

\subsection{The right action of $\underline{\mathcal{H}}^l_{r}$ on $\underline{V}_l^{\otimes r}$}

We will give some new symbols.
Let $\mathcal{N}:=\{1,2,..,m+n,m+n+1\}$.
Let $v'_i=v_i$ if $i=1,..,m$, $v'_{m+1}=v$ and $v'_{i+1}=v_i$ if $i=m+1,...,m+n$.
For any $\mathbf{j}=(j_1,...,j_r)\in\mathcal{N}^r$, let $v'_{\mathbf{j}}=v'_{j_1}\otimes v'_{j_2}\otimes ...\otimes v'_{j_r}\in\underline{V}^{\otimes r}$. Let $\epsilon'_{\mathbf{j}}=(\bar{v}'_{j_1},\bar{v}'_{j_2},...,\bar{v}'_{j_r})\in\mathbb{Z}^{r}$.

Recall that $\pi_l$ is the representation of symmetric group $\mathfrak{S}_l$ on $V^{\otimes l}$.
Let $x_\sigma^{\underline{l}}=\pi_l(\sigma)\otimes id^{\otimes r-l}\in \mathrm{End}_{\mathbb{K}}(\underline{V}_{\underline{l}}^{\otimes r})$,
and extend the action of $x_\sigma^{\underline{l}}$ on $\underline{V}_l^{\otimes r}$ by
defining $v_{\mathbf{i},I}\cdot x_\sigma^{\underline{l}}=\delta_{\underline{l},I}\gamma(\epsilon_{\mathbf{i}},\sigma)v_{\mathbf{i}\cdot\sigma,\underline{l}}$ for all $v_{\mathbf{i},I}\in basis(\underline{V}_l^{\otimes r}):=\bigcup_{I\subseteq \underline{r},\atop|I|=l}basis(\underline{V}_I^{\otimes r})$.

For each $v'_{\mathbf{j}}\in basis(\underline{V}^{\otimes r}_l)$, define
$$v'_{\mathbf{j}}\cdot \mathbf{s}_i=\pi_r(s_i)v'_{\mathbf{j}}=\gamma(\epsilon'_{\mathbf{j}}, s_i)v'_{\mathbf{j}\cdot s_i},$$
$$v'_{\mathbf{j}}\cdot\mathbf{x}^{{(l)}}_\sigma=v'_{\mathbf{j}}\cdot x^{\underline{l}}_\sigma=0\text{ if }v'_{\mathbf{j}}\notin \underline{V}^{\otimes r}_{\underline{l}},$$
$$v'_{\mathbf{j}}\cdot\mathbf{x}^{{(l)}}_\sigma=v'_{\mathbf{j}}\cdot x^{\underline{l}}_\sigma=
\gamma(\epsilon_{\mathbf{i}},\sigma)v_{\mathbf{i}\cdot\sigma,\underline{l}}
\text{ if }v'_{\mathbf{j}}=v_{\mathbf{i},\underline{l}}\in\underline{V}^{\otimes r}_{\underline{l}}.$$

\begin{lemma}\label{l1}
	The following statements hold.
	
	$(1)$ The linear map $\Xi_l:\underline{\mathcal{H}}_{r}^l\rightarrow \mathrm{End}(\underline{V}_{l}^{\otimes r})_{\bar{0}}$ defined as above  is a homomorphism of algebras.
	
	$(2)$ For any $l\in \{0,1,...,r\}$, $\xi\in S'(m|n,r)$ commutes with any elements from $\Xi_l(\underline{\mathcal{H}}_{r}^l)$ in $\mathrm{End}_{\mathbb{K}}(\underline{V}^{\otimes r}_l)$.
	
	$(3)$ The linear map of $\Xi:\underline{\mathcal{H}}_{r}\rightarrow \mathrm{End}(\underline{V}^{\otimes r})_{\bar{0}}$ defined via
	\begin{equation}
		\Xi|_{\mathbb{K}\mathfrak{S}_r}=\pi_r,
	\end{equation}
	\begin{equation}
		\Xi(\mathbf{x}_\sigma^{(l)})|_{\underline{V}_l^{\otimes r}}=x_\sigma^{\underline{l}},
	\end{equation}
	\begin{equation}
		\Xi(\mathbf{x}_\sigma^{(l)})|_{\underline{V}_k^{\otimes r}}=0\text{ for }k\neq l
	\end{equation}
is a homomorphism of algebras.
\end{lemma}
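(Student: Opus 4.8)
The plan is to verify each of the three parts by checking that the defining relations of the algebra are respected by the proposed action, and that the commuting property reduces to an already-established fact about the honest (non-enhanced) Schur duality.

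For part (1), the task is to show that the assignments $\mathbf{s}_i\mapsto \pi_r(s_i)|_{\underline V_l^{\otimes r}}$ and $\mathbf{x}_\sigma^{(l)}\mapsto x_\sigma^{\underline l}$ respect relations (\ref{h1})--(\ref{h5}). Relations (\ref{h1}) and (\ref{h2}) hold because $\pi_r$ is already a representation of $\mathfrak{S}_r$ (as recalled before Theorem \ref{theorem5.2}), so the braid and involution relations for the $\mathbf{s}_i$ are automatic. Relation (\ref{h3}) follows from the cocycle identity for $\gamma$: evaluating $\mathbf{x}_\sigma^{(l)}\mathbf{x}_\mu^{(l)}$ on a basis vector $v_{\mathbf{i},\underline l}$ produces $\gamma(\epsilon_{\mathbf{i}},\sigma)\gamma(\epsilon_{\mathbf{i}\cdot\sigma},\mu)v_{\mathbf{i}\cdot\sigma\cdot\mu,\underline l}$, and one checks $\gamma(\epsilon_{\mathbf{i}},\sigma)\gamma(\epsilon_{\mathbf{i}\cdot\sigma},\mu)=\gamma(\epsilon_{\mathbf{i}},\sigma\mu)$ so this equals $x_{\sigma\circ\mu}^{\underline l}$; on vectors outside $\underline V_{\underline l}^{\otimes r}$ both sides are zero. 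Relations (\ref{h4}) and (\ref{h5}) are the heart of the verification. Here I would compute both sides on $v_{\mathbf{i},\underline l}$ and on vectors supported on a general $I\neq\underline l$. For (\ref{h4}), with $i<l$, the index $i$ lies inside the ``$V$-part'' so $\mathbf{s}_i$ permutes two of the first $l$ tensor factors, and the sign $\gamma(\epsilon_{\mathbf{i}},s_i)$ is exactly what is needed to match $x_{s_i\circ\sigma}^{\underline l}$; on vectors not in $\underline V_{\underline l}^{\otimes r}$ both $x_\sigma^{\underline l}$ and the product vanish by definition. For (\ref{h5}), with $i>l$, the transposition $\mathbf{s}_i$ acts on two factors each equal to the enhanced vector $v$ (after passing outside the first $l$ slots), so it acts by the scalar $\gamma(\epsilon'_{\mathbf{j}},s_i)=(-1)^{\bar v\,\bar v}$; the key point is that this scalar is the \emph{same} whether applied before or after $\mathbf{x}_\sigma^{(l)}$, which is precisely why the weakened relation $\mathbf{s}_i\mathbf{x}_\sigma=\mathbf{x}_\sigma\mathbf{s}_i$ holds while the stronger original relation $\mathbf{s}_i\mathbf{x}_\sigma=\mathbf{x}_\sigma$ would fail when $\bar v=\bar 1$.

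For part (2), I would invoke the decomposition $S'(m|n,r)=\sum_{l=0}^r\alpha_l(S(m|n,l))$ together with Proposition \ref{pro4.2}, which shows that each $\xi_{\mathbf{i},\mathbf{j},k}$ preserves each $\underline V_I^{\otimes r}$ and acts on $\underline V_{\underline l}^{\otimes r}\cong V^{\otimes l}\otimes v^{\otimes(r-l)}$ through the ordinary Schur action $\xi_{\mathbf{i},\mathbf{j}}$ on the first tensor factor (the identity on $v^{\otimes(r-l)}$). Meanwhile $x_\sigma^{\underline l}$ acts as $\pi_l(\sigma)\otimes\mathrm{id}^{\otimes(r-l)}$ on the same summand. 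Thus on $\underline V_{\underline l}^{\otimes r}$ the commuting of $\xi$ with $x_\sigma^{\underline l}$ is literally the classical statement that the Schur action of $S(m|n,l)$ commutes with the symmetric-group action $\pi_l$ on $V^{\otimes l}$, which is part (1) of Theorem \ref{theorem5.2}. On a general summand $\underline V_I^{\otimes r}$ one reduces to the $\underline l$ case by the $\mathfrak S_r$-equivariance that identifies $\underline V_I^{\otimes r}$ with $\underline V_{\underline l}^{\otimes r}$; since both $\xi$ and the $\Xi_l$-action are built $\mathfrak S_r$-compatibly, commutation transports across summands.

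For part (3), the global map $\Xi$ is assembled from the $\Xi_l$ of part (1) together with the $\mathfrak S_r$-action $\pi_r$, so relations (\ref{h1})--(\ref{h5}) follow from part (1) after checking that the $\mathbf{s}_i$-action by the single operator $\pi_r(s_i)$ is compatible with the block-diagonal $\mathbf{x}_\sigma^{(l)}$-action across all the $\underline V_l^{\otimes r}$ simultaneously. The only genuinely new relation to verify is the orthogonality (\ref{h6}): $\mathbf{x}_\delta^{(l)}\mathbf{x}_\gamma^{(k)}=0$ for $k\neq l$. This is immediate from the support conditions, since $\Xi(\mathbf{x}_\gamma^{(k)})$ sends everything into $\underline V_k^{\otimes r}$ (killing $\underline V_j^{\otimes r}$ for $j\neq k$), while $\Xi(\mathbf{x}_\delta^{(l)})$ annihilates $\underline V_k^{\otimes r}$ when $l\neq k$; hence the composite is zero on every summand. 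I expect the main obstacle to be the careful bookkeeping of the Koszul signs in relation (\ref{h4}), where the braiding sign $\gamma(\epsilon_{\mathbf{i}},s_i)$ from the definition of $\pi_r$ must be matched against the sign $\gamma(\epsilon_{\mathbf{i}},\sigma)$ absorbed into the definition of $x_\sigma^{\underline l}$; the cocycle identity for $\gamma$ (used already in deriving relation (\ref{h3})) is what makes these agree, and verifying it in the enhanced setting, where some tensor slots carry the extra vector $v$, is the most delicate computation.
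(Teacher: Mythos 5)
Your overall architecture matches the paper's: part (1) by checking relations (\ref{h1})--(\ref{h5}) on basis vectors, part (3) by assembling the $\Xi_l$ and disposing of (\ref{h6}) by support considerations, exactly as the paper does. Your key observation for (\ref{h5}) --- that for $i>l$ the operator $\mathbf{s}_i$ acts on $\underline{V}^{\otimes r}_{\underline{l}}$ by the scalar $(-1)^{\bar{v}\bar{v}}$, which passes through $\mathbf{x}_\sigma$ unchanged, so the weakened relation $\mathbf{s}_i\mathbf{x}_\sigma=\mathbf{x}_\sigma\mathbf{s}_i$ survives while the original $\mathbf{s}_i\mathbf{x}_\sigma=\mathbf{x}_\sigma$ fails for $\bar{v}=\bar{1}$ --- is precisely the content of the paper's two-case computation, stated more efficiently. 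Where you genuinely diverge is part (2): the paper proves $\xi_{\mathbf{i},\mathbf{j},l}\mathbf{x}_\sigma=\mathbf{x}_\sigma\xi_{\mathbf{i},\mathbf{j},l}$ by an explicit comparison of coefficients, invoking the product identity for $\alpha(\epsilon_{\mathbf{i}w}+\epsilon_{\mathbf{j}w},\epsilon_{\mathbf{j}w})$ from subsection 2.3, whereas you reduce via Proposition \ref{pro4.2} (namely $\xi_{\mathbf{i},\mathbf{j},l}v_{\mathbf{t},\underline{l}}=\xi_{\mathbf{i},\mathbf{j}}(v_{\mathbf{t}})\otimes v^{\otimes(r-l)}$ together with $x_\sigma^{\underline{l}}=\pi_l(\sigma)\otimes \mathrm{id}^{\otimes(r-l)}$) to the statement that $\rho_l(S(m|n,l))\subseteq\mathrm{End}_{\mathbb{K}\mathfrak{S}_l}(V^{\otimes l})$, i.e.\ Theorem \ref{theorem5.2}(1) with $l$ in place of $r$. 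That reduction is valid and arguably cleaner, since it avoids the $\alpha$/$\gamma$ sign bookkeeping entirely; moreover, off the summand $\underline{V}^{\otimes r}_{\underline{l}}$ both composites vanish outright (for $I\neq\underline{l}$, $x_\sigma^{\underline{l}}$ kills $\underline{V}^{\otimes r}_I$ while $\xi$ preserves it by Proposition \ref{pro4.2}), so your appeal to transporting commutation across summands by $\mathfrak{S}_r$-equivariance is more machinery than is needed there.

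The one genuine gap is also in part (2): the algebra $\underline{\mathcal{H}}^l_r$ is generated by the $\mathbf{x}_\sigma$ \emph{together with all} $\mathbf{s}_i$, $1\leq i\leq r-1$, so you must also show that $\xi\in S'(m|n,r)$ commutes with $\Xi_l(\mathbf{s}_i)=\pi_r(s_i)|_{\underline{V}^{\otimes r}_l}$, and your proposal only asserts this (``built $\mathfrak{S}_r$-compatibly'') without argument. It is not a formal triviality: it requires knowing that $\rho_r(S'(m|n,r))$ lands in $\mathrm{End}_{\mathbb{K}\mathfrak{S}_r}(\underline{V}^{\otimes r})$. The paper supplies this in one line, observing that $S'(m|n,r)$ is a super subalgebra of $S(m+1|n,r)$ (if $\bar{v}=\bar{0}$) or of $S(m|n+1,r)$ (if $\bar{v}=\bar{1}$) --- a consequence of Lemma \ref{LLe} --- and that the latter is isomorphic to $\mathrm{End}_{\mathbb{K}\mathfrak{S}_r}(\underline{V}^{\otimes r})$ by Theorem \ref{theorem5.2}(1) applied to the enhanced space viewed as $\mathbb{K}^{m+1|n}$ or $\mathbb{K}^{m|n+1}$. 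Adding that sentence closes the gap; note it is also the same citation that legitimizes your shortcut in part (1) of taking relations (\ref{h1})--(\ref{h2}) for granted from $\pi_r$ being a representation on the enhanced tensor space, rather than re-verifying the Koszul signs by hand as the paper chooses to do.
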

\begin{proof}
	(1) For $l\leq r$, we need to show that $\Xi_l$ keep the relations (\ref{h1})-(\ref{h5}).
	
	Let $v'_{\mathbf{t}}=v'_{t_1}\otimes v'_{t_2}\otimes...\otimes v'_{t_r}\in V^{\otimes r}_{l}$.
	
	For (\ref{h1}), if $1\leq i\leq r-1$, then
	\begin{align*}
		&\quad v'_{\mathbf{t}}\cdot\mathbf{s}_i^2\\
		&= (-1)^{\bar{v}'_{t_i}\bar{v}'_{t_{i+1}}}(v'_{t_1}\otimes...\otimes v'_{t_{i+1}}\otimes v'_{t_i}\otimes...\otimes v'_{t_r})\cdot \mathbf{s}_i\\
		 &=(-1)^{\bar{v}'_{t_i}\bar{v}'_{t_{i+1}}}(-1)^{\bar{v}'_{t_i}\bar{v}'_{t_{i+1}}}v'_{t_1}\otimes...\otimes v'_{t_{i}}\otimes v'_{t_{i+1}}\otimes...\otimes v'_{t_r}\\
		&=v'_{\mathbf{t}}.
	\end{align*}
	If $j-i>1$, then
	\begin{align*}
		&\quad v'_{\mathbf{t}}\cdot \mathbf{s}_i\mathbf{s}_j\\
		&=(-1)^{\bar{v}'_{{t}_i}\bar{v}'_{t_{i+1}}}(v'_{t_1}\otimes...\otimes v'_{t_{i+1}}\otimes v'_{t_i}\otimes...\otimes v'_{t_r})\cdot \mathbf{s}_j\\
		 &=(-1)^{\bar{v}'_{{t}_i}\bar{v}'_{t_{i+1}}}(-1)^{\bar{v}'_{t_j}\bar{v}'_{t_{j+1}}}(v'_{t_1}\otimes...\otimes v'_{t_{i+1}}\otimes v'_{t_i}\otimes...\otimes v'_{t_{j+1}}\otimes v'_{t_j}\otimes...\otimes v'_{t_r})\\
		&=v'_{\mathbf{t}}\cdot \mathbf{s}_j\mathbf{s}_i.
	\end{align*}
	
	 For (\ref{h2}), if $j-i=1$, then
	\begin{align*}
		&\quad v'_{\mathbf{t}}\cdot \mathbf{s}_i\mathbf{s}_j\mathbf{s}_i\\
		&=(-1)^{\bar{v}'_{t_i}\bar{v}'_{{t}_{i+1}}}(v'_{t_1}\otimes...\otimes v'_{t_{i+1}}\otimes v'_{t_i}\otimes...\otimes v'_{t_r})\cdot \mathbf{s}_j\mathbf{s}_i\\
		 &=(-1)^{\bar{v}'_{t_i}\bar{v}'_{{t}_{i+1}}}(-1)^{\bar{v}'_{t_i}\bar{v}'_{{t}_{i+2}}}(v'_{t_1}\otimes...\otimes v'_{t_{i+1}}\otimes v'_{t_{i+2}}\otimes v'_{t_{i}}\otimes...\otimes v'_{t_r})\cdot \mathbf{s}_i\\
		 &=(-1)^{\bar{v}'_{t_i}\bar{v}'_{{t}_{i+1}}}(-1)^{\bar{v}'_{t_i}\bar{v}'_{{t}_{i+2}}}(-1)^{\bar{v}'_{t_{i+2}}\bar{v}'_{{t}_{i+1}}}(v'_{t_1}\otimes...\otimes v'_{t_{i+2}}\otimes v'_{t_{i+1}}\otimes v'_{t_{i}}\otimes...\otimes v'_{t_r}),
	\end{align*}
and
	\begin{align*}
		&\quad v'_{\mathbf{t}}\cdot \mathbf{s}_j\mathbf{s}_i\mathbf{s}_j\\
		&=(-1)^{\bar{v}'_{t_{i+1}}\bar{v}'_{{t}_{i+2}}}(v'_{t_1}\otimes...\otimes v'_{t_{i+2}}\otimes v'_{t_{i+1}}\otimes...\otimes v'_{t_r})\cdot \mathbf{s}_i\mathbf{s}_j\\
		 &=(-1)^{\bar{v}'_{t_{i+1}}\bar{v}'_{{t}_{i+2}}}(-1)^{\bar{v}'_{t_{i}}\bar{v}'_{{t}_{i+2}}}(v'_{t_1}\otimes...\otimes v'_{t_{i+2}}\otimes v'_{t_{i}}\otimes v'_{t_{i+1}}\otimes...\otimes v'_{t_r})\cdot \mathbf{s}_j\\
		 &=(-1)^{\bar{v}'_{t_{i+1}}\bar{v}'_{{t}_{i+2}}}(-1)^{\bar{v}'_{t_{i}}\bar{v}'_{{t}_{i+2}}}(-1)^{\bar{v}'_{t_{i}}\bar{v}'_{{t}_{i+1}}}(v'_{t_1}\otimes...\otimes v'_{t_{i+2}}\otimes v'_{t_{i+1}}\otimes v'_{t_{i}}\otimes...\otimes v'_{t_r}).
	\end{align*}
	
	Let $v_{\mathbf{t},\underline{l}}=v_{t_1}\otimes v_{t_2}\otimes...\otimes v_{t_l}\otimes v\otimes...\otimes v\in V^{\otimes r}_{\underline{l}}$.
	
 For (\ref{h3}),	
\begin{align*}
	&\quad v_{\mathbf{t},\underline{l}}\cdot\mathbf{x}_\sigma\mathbf{x}_\mu\\
	&=\gamma(\epsilon_{\mathbf{t}},\sigma)v_{\mathbf{t}\cdot\sigma,\underline{l}}\cdot\mathbf{x}_\mu\\
	 &=\gamma(\epsilon_{\mathbf{t}},\sigma)\gamma(\epsilon_{\mathbf{t}\cdot\sigma},\mu)v_{\mathbf{t}\cdot\sigma\mu,\underline{l}}\\
	&=\gamma(\epsilon_{\mathbf{t}},\sigma\mu)v_{\mathbf{t}\cdot\sigma\mu,\underline{l}}\\
	&=v_{\mathbf{t},\underline{l}}\cdot\mathbf{x}_{\sigma\mu}.
\end{align*}
	
 For (\ref{h4}),	if $i<l$ and $\sigma\in\mathfrak{S}_l$, then
	\begin{align*}
		&\quad v_{\mathbf{t},\underline{l}}\cdot\mathbf{s}_i\mathbf{x}_\sigma\\
		&=\gamma(\epsilon_{\mathbf{t}},s_i)v_{\mathbf{t}\cdot s_i,\underline{l}}\cdot\mathbf{x}_\sigma\\
		&=\gamma(\epsilon_{\mathbf{t}},s_i)\gamma(\epsilon_{\mathbf{t}}\cdot s_i,\sigma)v_{\mathbf{t}\cdot s_i\sigma,\underline{l}}\\
		&=\gamma(\epsilon_{\mathbf{t}},s_i\sigma)v_{\mathbf{t}\cdot s_i\sigma,\underline{l}}\\
		&=v_{\mathbf{t},\underline{l}}\cdot\mathbf{x}_{s_i\sigma},
	\end{align*}
and
	\begin{align*}
		&\quad v_{\mathbf{t},\underline{l}}\cdot\mathbf{x}_\sigma\mathbf{s}_i\\
		&=\gamma(\epsilon_{\mathbf{t}},\sigma)v_{\mathbf{t}\cdot \sigma,\underline{l}}\cdot\mathbf{s}_i\\
		&=\gamma(\epsilon_{\mathbf{t}},\sigma)\gamma(\epsilon_{\mathbf{t}}\cdot \sigma,s_i)v_{\mathbf{t}\cdot \sigma s_i,\underline{l}}\\
		&=\gamma(\epsilon_{\mathbf{t}},\sigma s_i)v_{\mathbf{t}\cdot \sigma s_i,\underline{l}}\\
		&=v_{\mathbf{t},\underline{l}}\cdot\mathbf{x}_{s_i\sigma}.
	\end{align*}
	
 For (\ref{h5}), if $i>l$ ,$\sigma\in\mathfrak{S}_l$, and $\bar{v}=\bar{0}$, then
	\begin{align*}
		&\quad  v_{\mathbf{t},\underline{l}}\cdot\mathbf{s}_i\mathbf{x}_\sigma\\
		&=v_{\mathbf{t},\underline{l}}\cdot\mathbf{x}_\sigma\\
		&=\gamma(\epsilon_{\mathbf{t}},\sigma)v_{\mathbf{t}\cdot \sigma,\underline{l}}\\
		&=\gamma(\epsilon_{\mathbf{t}},\sigma)v_{\mathbf{t}\cdot \sigma,\underline{l}}\cdot \mathbf{s}_i\\
		&=v_{\mathbf{t},\underline{l}}\cdot\mathbf{x}_\sigma\mathbf{s}_i.
	\end{align*}
      If $i>l$ ,$\sigma\in\mathfrak{S}_l$, and $\bar{v}=\bar{1}$, then
    \begin{align*}
        &\quad  v_{\mathbf{t},\underline{l}}\cdot\mathbf{s}_i\mathbf{x}_\sigma\\
    	&=-v_{\mathbf{t},\underline{l}}\cdot\mathbf{x}_\sigma\\
    	&=-\gamma(\epsilon_{\mathbf{t}},\sigma)v_{\mathbf{t}\cdot \sigma,\underline{l}}\\
    	&=\gamma(\epsilon_{\mathbf{t}},\sigma)v_{\mathbf{t}\cdot \sigma,\underline{l}}\cdot \mathbf{s}_i\\
    	&=v_{\mathbf{t},\underline{l}}\cdot\mathbf{x}_\sigma\mathbf{s}_i.
   \end{align*}
	(2)
	If $\overline{v}=\overline{0}$, then $S'(m|n,r)\subset S(m+1|n,r)\cong \mathrm{End}_{\mathbb{K}\mathfrak{S}_r}(\underline{V}^{\otimes r}).$
	If $\overline{v}=\overline{1}$, then $S'(m|n,r)\subset S(m|n+1,r)\cong \mathrm{End}_{\mathbb{K}\mathfrak{S}_r}(\underline{V}^{\otimes r}).$
	So the elements of $S'(m|n,r)$ commutes with $\Xi(\mathbf{s}_i) \text{ for } 1\leq i\leq r-1$ without the consideration of the parity of $v$.
	
	As a result, we only need to prove $\xi_{\mathbf{i},\mathbf{j},l}\mathbf{x}_\sigma=\mathbf{x}_\sigma\xi_{\mathbf{i},\mathbf{j},l}$ for $\sigma\in\mathfrak{S}_l$.
	
	We have
		\begin{align*}
		&\ \ \ \xi_{\mathbf{i},\mathbf{j},l}(v_{\mathbf{j}',\underline{l}}\cdot \mathbf{x}_\sigma)
		=\xi_{\mathbf{i},\mathbf{j},l}(v_{\mathbf{j}'\cdot \sigma,\underline{l}})\gamma(\epsilon_{\mathbf{j}'},\sigma)\\
		&=\sum_{(\mathbf{i},\mathbf{j})\sim (\mathbf{k},\mathbf{j}'\cdot \sigma)}
		\sigma(\mathbf{i},\mathbf{j};\mathbf{k},\mathbf{j}'\cdot \sigma)
		\alpha(\epsilon_{\mathbf{k}}+\epsilon_{\mathbf{j}'\sigma},\epsilon_{\mathbf{j}'\sigma})
		\gamma(\epsilon_{\mathbf{j}'},\sigma)v_{\mathbf{k}}\otimes v^{\otimes r-l},
	\end{align*}	
    and
	\begin{align*}
		&\ \ \ (\xi_{\mathbf{i},\mathbf{j},l}v_{\mathbf{j'},l})\mathbf{x}_\sigma\\
		&=\sum_{(\mathbf{k},\mathbf{j'})\sim (\mathbf{i},\mathbf{j})}
		\sigma(\mathbf{i},\mathbf{j};\mathbf{k},\mathbf{j'})
		\alpha(\epsilon_{\mathbf{k}}+\epsilon_{\mathbf{j'}},\epsilon_{\mathbf{j'}})
		(v_{\mathbf{k}}\otimes v^{\otimes r-l})\mathbf{x}_\sigma\\
		&=\sum_{(\mathbf{k},\mathbf{j'})\sim (\mathbf{i},\mathbf{j})}
		\sigma(\mathbf{i},\mathbf{j};\mathbf{k},\mathbf{j'})
		\alpha(\epsilon_{\mathbf{k}}+\epsilon_{\mathbf{j'}},\epsilon_{\mathbf{j'}})
		\gamma(\epsilon_{\mathbf{k}},\sigma)v_{\mathbf{k}\sigma}\otimes v^{\otimes r-l},
	\end{align*}
where $\mathbf{j}'\in I(m|n,l)$ and $(\mathbf{i},\mathbf{j})\in\Omega(m|n,l)$.
	
	Now we compare the coefficients of above two equations
	\begin{align*}
	&\ \ \ \sigma(\mathbf{i},\mathbf{j};\mathbf{k}\cdot \sigma,\mathbf{j'}\cdot \sigma)
	\alpha(\epsilon_{\mathbf{k}\cdot \sigma}+\epsilon_{\mathbf{j'}\cdot \sigma},\epsilon_{\mathbf{j'}\sigma})
	\gamma(\epsilon_{\mathbf{j'}},\sigma)\\
	&=
	\sigma(\mathbf{i},\mathbf{j};\mathbf{k},\mathbf{j'})
	\sigma(\mathbf{k},\mathbf{j'};\mathbf{k}\cdot \sigma,\mathbf{j'}\cdot \sigma)
	\alpha(\epsilon_{\mathbf{k}}+\epsilon_{\mathbf{j'}},\epsilon_{\mathbf{j'}})\\
	&\ \ \ \prod_{s<t,\sigma^{-1}s>\sigma^{-1}t}(-1)^{\bar{j_s'}(\bar{k}_t+\bar{j'_t})+\bar{j'_t}(\bar{k}_s+\bar{j'_s})}
	\gamma(\epsilon_{\mathbf{j'}},\sigma)\\
	&=\sigma(\mathbf{i},\mathbf{j};\mathbf{k},\mathbf{j'})
	\sigma(\mathbf{k},\mathbf{j'};\mathbf{k}\cdot \sigma,\mathbf{j'}\cdot \sigma)
	\alpha(\epsilon_{\mathbf{k}}+\epsilon_{\mathbf{j'}},\epsilon_{\mathbf{j'}})\\
	&\ \ \ \gamma(\epsilon_{\mathbf{k}}+\epsilon_{\mathbf{j'}+\mathbf{j'},\sigma})
	\gamma(\epsilon_{\mathbf{k}}+\epsilon_{\mathbf{j'}},\sigma)
	\gamma(\epsilon_{\mathbf{k}}+\epsilon_{\mathbf{j'}+\mathbf{j'}},\sigma)
	\gamma(\epsilon_{\mathbf{j'},\sigma})\\
	&=\sigma(\mathbf{i},\mathbf{j};\mathbf{k},\mathbf{j'})
	\alpha(\epsilon_{\mathbf{k}}+\epsilon_{\mathbf{j'}},\epsilon_{\mathbf{j'}})
	\gamma(\epsilon_{\mathbf{k}},\sigma).
\end{align*}
	
	So $\xi_{\mathbf{i},\mathbf{j},l}\mathbf{x}_\sigma=\mathbf{x}_\sigma\xi_{\mathbf{i},\mathbf{j},l}$.
	
	(3) Different from (1) and (2), the argument here will be unified, independent of the parity of $v$.
	
	By (2),
	we only need to verify $\Xi$ preserving the relation (\ref{h6}).
	For any elements $w\notin V_{\underline{l}}^{\otimes r}$ and $w\notin V_{\underline{k}}^{\otimes r}$,
	$$w\cdot\mathbf{x}_\delta^{(l)}\mathbf{x}_\gamma^{(k)}=0.$$
	If $v_{\mathbf{j},\underline{l}}\in V_{\underline{l}}^{\otimes r}$, then
	 $$v_{\mathbf{j},\underline{l}}\cdot\mathbf{x}_\delta^{(l)}\mathbf{x}_\gamma^{(k)}=\gamma(\epsilon_{\mathbf{j}},\delta)v_{\mathbf{j}\cdot\delta,\underline{l}}\cdot\mathbf{x}_\gamma^{(k)}=0.$$
	If $v_{\mathbf{j},\underline{k}}\in V_{\underline{k}}^{\otimes r}$, then
	$$v_{\mathbf{j},\underline{k}}\cdot\mathbf{x}_\delta^{(l)}\mathbf{x}_\gamma^{(k)}=0.$$
	The Lemma is proved.
\end{proof}

\section{Levi-type Schur-Sergeev duality}
Let $D(m|n,r):=\Xi(\underline{\mathcal{H}}_{r})$ and $D(m|n,r)_l:=\Xi_l(\underline{\mathcal{H}}_{r}^l)$. Extend the action of $D(m|n,r)_l$ on the whole of $\underline{V}^{\otimes r}$ by setting $D(m|n,r)_l(\underline{V}^{\otimes r}_k)=0$ for $k\neq l$.
So $D(m|n,r)_l$ is a subalgebra of $D(m|n,r)$, and $\bigoplus_{l=0}^rD(m|n,r)_l=\sum_{l=0}^rD(m|n,r)_l\subseteq D(m|n,r)$. Similarly, we extend any given $\varphi\in\mathrm{End}_{S'(m|n,l)}(\underline{V}_l^{\otimes r})$ to an element $\tilde{\varphi}$ of
$\mathrm{End}_{S'(m|n,l)}(\underline{V}^{\otimes r})$ by defining $\tilde{\varphi}(\underline{V}_k^{\otimes r})=0$ for $k\neq l$. Then $\tilde{\varphi}\in\mathrm{End}_{S'(m|n,r)}(\underline{V}^{\otimes r})$.
And we have $\mathrm{End}_{S'(m|n,r)}(\underline{V}^{\otimes r})\supseteq\bigoplus\limits_{l=0}^r\mathrm{End}_{S'(m|n,l)}(\underline{V}_l^{\otimes r})$.

In this section, we will establish the duality between the Schur superalgebras $S'(m|n,r)$ and $D(m|n,r)$.


\begin{proposition}\label{l5}
	The action of $S(m|n,l)$ on $\underline{V}_l^{\otimes r}$ is faithful, i.e., if $S(m|n,l)w_l=0$ for $w_l\in\underline{V}_{l}^{\otimes r}$, then $w_l=0$.
\end{proposition}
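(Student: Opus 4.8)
The plan is to reduce the assertion to the observation that $\underline{V}_l^{\otimes r}$ is a \emph{unital} module over $S(m|n,l)$. Once I exhibit a single element of $S(m|n,l)$ that acts as the identity operator on all of $\underline{V}_l^{\otimes r}$, the conclusion is immediate: a vector killed by the whole algebra is in particular killed by that distinguished element, and hence is zero. So the entire content lies in producing the identity operator explicitly.

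First I would record the block structure. By Proposition \ref{pro4.2}, each summand $\underline{V}_I^{\otimes r}$ in the decomposition $\underline{V}_l^{\otimes r}=\bigoplus_{|I|=l}\underline{V}_I^{\otimes r}$ is preserved by the action of $S(m|n,l)$, while $S(m|n,l)$ annihilates $\underline{V}_k^{\otimes r}$ for $k\neq l$. Thus every $\xi\in S(m|n,l)$ respects the block decomposition, and it suffices to understand its action on basis vectors $v_{\mathbf{t},I}$.

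Next I would produce the identity operator. Set $\mathbf{1}_l:=\sum_{\mathbf{i}}\xi_{\mathbf{i},\mathbf{i},l}$, where $\mathbf{i}$ runs over a fixed set of $\mathfrak{S}_l$-orbit representatives in $I(m|n,l)$; note that every diagonal double index $(\mathbf{i},\mathbf{i})$ is automatically strict, since $\bar{i}_s+\bar{i}_s=\bar{0}$, so each $\xi_{\mathbf{i},\mathbf{i},l}$ is defined. Applying Proposition \ref{pro4.2} to $v_{\mathbf{t},I}$, the relation $(\mathbf{k},\mathbf{t})\sim(\mathbf{i},\mathbf{i})$ forces $\mathbf{k}=\mathbf{t}$ and forces $\mathbf{i}$ to be the representative of the orbit of $\mathbf{t}$, so exactly one summand survives. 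Its coefficient is $\sigma(\mathbf{i},\mathbf{i};\mathbf{t},\mathbf{t})\,\alpha(\epsilon_{\mathbf{t},I}+\epsilon_{\mathbf{t},I},\epsilon_{\mathbf{t},I})=\gamma(\epsilon_{\mathbf{i}}+\epsilon_{\mathbf{i}},w)\,\alpha(0,\epsilon_{\mathbf{t},I})=\gamma(0,w)\cdot 1=1$. Hence $\mathbf{1}_l\,v_{\mathbf{t},I}=v_{\mathbf{t},I}$ for every basis vector, i.e. $\mathbf{1}_l$ acts as $\mathrm{id}_{\underline{V}_l^{\otimes r}}$.

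Finally, if $w_l\in\underline{V}_l^{\otimes r}$ satisfies $S(m|n,l)w_l=0$, then in particular $w_l=\mathbf{1}_l\,w_l=0$, which is exactly the claim. The only point requiring care is the sign bookkeeping in the previous step: one must check that both the permutation sign $\gamma$ and the bilinear-form sign $\alpha$ collapse to $1$ on the diagonal terms, which uses precisely that the first argument of $\alpha$ and the parity input of $\gamma$ vanish identically there. I do not expect any genuine obstacle beyond this routine verification, and in particular the argument is uniform in the parity of $v$.
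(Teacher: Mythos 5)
Your proof is correct, and it reorganizes the paper's argument in a genuinely cleaner way, though both rest on the same key computation. In both proofs the crucial fact is that, by Proposition \ref{pro4.2}, a diagonal element $\xi_{\mathbf{i},\mathbf{i},l}$ acts on a basis vector $v_{\mathbf{t},I}$ (with $|I|=l$) as the projection onto the $\mathfrak{S}_l$-orbit of $\mathbf{i}$, with coefficient $\sigma(\mathbf{i},\mathbf{i};\mathbf{t},\mathbf{t})\,\alpha(\epsilon_{\mathbf{t},I}+\epsilon_{\mathbf{t},I},\epsilon_{\mathbf{t},I})=1$, precisely because both sign factors are evaluated at the zero parity vector, as you check. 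The paper exploits this only on the distinguished block $\underline{V}_{\underline{l}}^{\otimes r}$: it decomposes $w_l=\sum_I w_I$, extracts coefficients to force $w_{\underline{l}}=0$, and then disposes of the remaining blocks by choosing $\tau_I\in\mathfrak{S}_r$ with $\tau_I(w_I)\in\underline{V}_{\underline{l}}^{\otimes r}$, using that $S(m|n,l)$ commutes with the $\mathfrak{S}_r$-action. You instead sum the diagonal elements over orbit representatives into $\mathbf{1}_l=\sum_{\mathbf{i}}\xi_{\mathbf{i},\mathbf{i},l}$ and note that Proposition \ref{pro4.2} already gives the action on $v_{\mathbf{t},I}$ for \emph{arbitrary} $I$, so $\mathbf{1}_l$ acts as the identity on all of $\underline{V}_l^{\otimes r}$ simultaneously; the conclusion $w_l=\mathbf{1}_l w_l=0$ is then immediate. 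What your route buys is economy --- no block-by-block reduction and no appeal to the $\mathfrak{S}_r$-equivariance of the $S(m|n,l)$-action --- plus the stronger byproduct that $\underline{V}_l^{\otimes r}$ is a unital $S(m|n,l)$-module, which is of independent use. Two pedantic points, neither a gap: your sum tacitly requires the representatives in $\Omega(m|n,l)$ for diagonal orbits to be chosen diagonal, which is harmless since the $\mathfrak{S}_l$-orbit of a diagonal pair consists of diagonal pairs (the paper's proof makes the same tacit choice when it quantifies over $(\mathbf{i},\mathbf{i})\in\Omega(m|n,l)$); and Proposition \ref{pro4.2} is stated for $l\geq 1$, so for $l=0$ you should take $\mathbf{1}_0=\xi_0$, which acts as the identity on $\underline{V}_0^{\otimes r}=\mathbb{K}v^{\otimes r}$ by the computation in Lemma \ref{LLe}. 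Your observation that the argument is uniform in the parity of $v$ is also correct, since the entries of $\epsilon_{\mathbf{t},I}$ coming from $v$ never enter once the first argument of $\alpha$ vanishes.
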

\begin{proof}
	The proof is independent of the parity of $v$.
	
	Let $w_l=\sum\limits_{I\subseteq\underline{r},|I|=l}w_I$ with $w_I\in\underline{V}_I^{\otimes r}$.
	Since $S(m|n,l)\underline{V}_{I}^{\otimes r}\subseteq\underline{V}_{I}^{\otimes r}$,
	it follows that $S(m|n,l)w_I=0$.
	Suppose $w_{\underline{l}}=\sum\limits_{\mathbf{t}\in I(m|n,l)}\lambda_{\mathbf{t}}v_{\mathbf{t}}\otimes v^{\otimes r-l}$.
	Then for any $(\mathbf{i},\mathbf{i})\in\Omega(m|n,l)$, we have 
	\begin{align*}
	0=\xi_{\mathbf{i},\mathbf{i},l}w_{\underline{l}}&=\sum_{\mathbf{t}\in I(m\mid n,l)}\lambda_{\mathbf{t}}\xi_{\mathbf{i},\mathbf{i},l}(v_{\mathbf{t}}\otimes v^{\otimes r-l})\\
	&=\sum_{\mathbf{t}\in I(m\mid n,l)}\lambda_{\mathbf{t}}\sum_{(\mathbf{k},\mathbf{t})\sim (\mathbf{i},\mathbf{i})}\sigma(\mathbf{i},\mathbf{i};\mathbf{k},\mathbf{t})\alpha(\epsilon_{\mathbf{k}}+\epsilon_{\mathbf{t}},\epsilon_{\mathbf{t}})v_{\mathbf{k}}\otimes v^{\otimes r-l}\\
	&=\sum_{\mathbf{t}\in I(m\mid n,l)}\lambda_{\mathbf{t}}\sum_{\mathbf{t}\sim\mathbf{i}}v_{\mathbf{t}}\otimes v^{\otimes r-l}.
    \end{align*}

	
    Hence $\lambda_{\mathbf{t}}=0$ for all $\mathbf{t}\sim \mathbf{i}$.
    Furthermore, $\lambda_{\mathbf{t}}=0$ for all $\mathbf{t}\in I(m|n,l)$.
    Then we know $w_{\underline{l}}=0$.
	
	For each $w_I$, we fix $\tau_I\in \mathfrak{S}_r$ such that $\tau_I(w_I)\in\underline{V}_{\underline{l}}^{\otimes r}$.
	Because elements of $S(m|n,l)$ commute with elements of $\mathfrak{S}_r$,
	$0=\tau_IS(m|n,l)w_I=S(m|n,l)\tau_I(w_I)$, i.e., $\tau_I(w_I)=0$. So $w_I=0$ and $w_l=0$.
    The proposition is proved.
\end{proof}

\begin{theorem}\label{l4}
Keep the above notations. The following Levi-type Schur-Sergeev duality holds:
\begin{align*}
    S'(m|n,r)\cong&\mathrm{End}_{D(m|n,r)}(\underline{V}^{\otimes r});\\
	&\mathrm{End}_{S'(m|n,r)}(\underline{V}^{\otimes r})\cong D(m|n,r) \text{ for } r\leq m+n.
\end{align*}
\end{theorem}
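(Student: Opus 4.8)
The plan is to reduce the global double centralizer statement to the individual graded summands of the decomposition $\underline{V}^{\otimes r}=\bigoplus_{l=0}^r\underline{V}_l^{\otimes r}$ and then to invoke the classical Schur--Sergeev duality of Theorem \ref{theorem5.2} on each piece. The argument rests on the fact that the projection $P_l\colon\underline{V}^{\otimes r}\to\underline{V}_l^{\otimes r}$ lies in \emph{both} algebras. On the one hand $P_l$ is the image under $\rho_r\circ\alpha_l$ of the unit of $S(m|n,l)$ (by Lemma \ref{LLe} this acts as the identity on $\underline{V}_l^{\otimes r}$ and as $0$ elsewhere), so $P_l\in\rho_r(S'(m|n,r))$. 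On the other hand, writing $x_e^{\underline{l}}=\Xi(\mathbf{x}_e^{(l)})$ for the projection onto $\underline{V}_{\underline{l}}^{\otimes r}$ and choosing $w_I\in\mathfrak{S}_r$ with $w_I(\underline{l})=I$, we have $P_l=\sum_{|I|=l}\pi_r(w_I)\,x_e^{\underline{l}}\,\pi_r(w_I)^{-1}\in D(m|n,r)$. Since both $S'(m|n,r)$ and $D(m|n,r)$ preserve each $\underline{V}_l^{\otimes r}$ (by Proposition \ref{pro4.2}, resp. since $\pi_r$ and the $\mathbf{x}_\sigma^{(k)}$ preserve the number of $v$-factors), every endomorphism commuting with either algebra commutes with all the $P_l$ and hence is block-diagonal, giving
\[
\mathrm{End}_{D(m|n,r)}(\underline{V}^{\otimes r})=\bigoplus_{l=0}^r\mathrm{End}_{D(m|n,r)_l}(\underline{V}_l^{\otimes r}),\qquad
\mathrm{End}_{S'(m|n,r)}(\underline{V}^{\otimes r})=\bigoplus_{l=0}^r\mathrm{End}_{S(m|n,l)}(\underline{V}_l^{\otimes r}).
\]
A parallel bookkeeping shows $D(m|n,r)=\bigoplus_{l=0}^r D(m|n,r)_l$: the restriction map $\psi\mapsto(\psi|_{\underline{V}_l^{\otimes r}})_l$ is injective, and it is surjective because $\Xi(h)\,P_l$ realises an arbitrary block for $h\in\underline{\mathcal{H}}_r^l$. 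Thus it suffices to prove the two local identities on each $\underline{V}_l^{\otimes r}$.

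For the first identity I would show $\mathrm{End}_{D(m|n,r)_l}(\underline{V}_l^{\otimes r})\cong S(m|n,l)$ via restriction to $\underline{V}_{\underline{l}}^{\otimes r}\cong V^{\otimes l}$. As $\pi_r(\mathfrak{S}_r)$ permutes the summands $\underline{V}_I^{\otimes r}$ transitively and $x_e^{\underline{l}}\in D(m|n,r)_l$ projects onto $\underline{V}_{\underline{l}}^{\otimes r}$, any $\varphi$ commuting with $D(m|n,r)_l$ preserves $\underline{V}_{\underline{l}}^{\otimes r}$ and is determined by $\varphi_0:=\varphi|_{\underline{V}_{\underline{l}}^{\otimes r}}$ through $\varphi|_{\underline{V}_I^{\otimes r}}=\pi_r(w_I)\,\varphi_0\,\pi_r(w_I)^{-1}$; the choice of $w_I$ is immaterial since $\varphi_0$ commutes with the stabiliser $\mathfrak{S}_l\times\mathfrak{S}_{r-l}$ (the $\mathfrak{S}_l$-factor acting as $\pi_l$, the $\mathfrak{S}_{r-l}$-factor as a scalar). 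Because $x_\sigma^{\underline{l}}$ restricts to $\pi_l(\sigma)$, the map $\varphi_0$ lies in $\mathrm{End}_{\mathbb{K}\mathfrak{S}_l}(V^{\otimes l})=\rho_l(S(m|n,l))$ by Theorem \ref{theorem5.2}(1); tracing through Proposition \ref{pro4.2} identifies $\varphi$ with $\rho_r(\xi_{\mathbf{i},\mathbf{j},l})|_{\underline{V}_l^{\otimes r}}$, so the summand is exactly $\alpha_l(S(m|n,l))$. Assembling over $l$ yields the first duality, with no restriction on $r$.

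For the second identity, assume $r\le m+n$. Since $S(m|n,l)$ acts identically on each summand, Proposition \ref{pro4.2} furnishes an $S(m|n,l)$-module isomorphism $\underline{V}_l^{\otimes r}\cong (V^{\otimes l})^{\oplus N}$ with $N=\binom{r}{l}$, whence $\mathrm{End}_{S(m|n,l)}(\underline{V}_l^{\otimes r})\cong M_N\big(\mathrm{End}_{S(m|n,l)}(V^{\otimes l})\big)\cong M_N(\mathbb{K}\mathfrak{S}_l)$, the last step by Theorem \ref{theorem5.2}(2) as $l\le r\le m+n$. It remains to show $D(m|n,r)_l$ exhausts this matrix algebra. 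Setting $E_I:=\pi_r(w_I)\,x_e^{\underline{l}}$ and $F_I:=x_e^{\underline{l}}\,\pi_r(w_I)^{-1}$, each $E_I\,x_\sigma^{\underline{l}}\,F_J$ lies in $D(m|n,r)_l$, is supported in the $(I,J)$-block, and there acts as $a_I\,\pi_l(\sigma)\,b_J$, where $a_I,b_J$ are the induced isomorphisms of $V^{\otimes l}$; these are invertible $S(m|n,l)$-endomorphisms, hence lie in $\rho_l(\mathbb{K}\mathfrak{S}_l)^{\times}$ again by Theorem \ref{theorem5.2}(2). As $\sigma$ runs over $\mathfrak{S}_l$ these span the full $(I,J)$-block, so $D(m|n,r)_l=\mathrm{End}_{S(m|n,l)}(\underline{V}_l^{\otimes r})$; summing over $l$ gives the claim.

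The main obstacle is the surjectivity in the last paragraph: one must check that the inter-summand connecting maps $\pi_r(w_I)$ are genuinely realised inside $\rho_l(\mathbb{K}\mathfrak{S}_l)$, which is precisely where $r\le m+n$ enters through Theorem \ref{theorem5.2}(2), and that the super signs arising when $\bar v=\bar 1$ (the factor $(-1)^{\bar v}$ from transposing two copies of $v$) contribute only invertible scalars and hence do not shrink the span. Everything else is transport of structure along the grading or a sign computation of the type already carried out in Proposition \ref{pro4.2} and Lemma \ref{l1}.
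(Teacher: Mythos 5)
Your proof is correct, and its skeleton coincides with the paper's: decompose $\underline{V}^{\otimes r}=\bigoplus_{l}\underline{V}_l^{\otimes r}$, use the transitivity of $\pi_r(\mathfrak{S}_r)$ on the summands $\underline{V}_I^{\otimes r}$ together with the projection $x_e^{\underline{l}}$ to reduce everything to $\underline{V}_{\underline{l}}^{\otimes r}\cong V^{\otimes l}$, and invoke Theorem \ref{theorem5.2} there (part (1) for the first duality, part (2) --- where $r\le m+n$ enters via $l\le r\le m+n$ --- for the second). Where you genuinely diverge is in the mechanism for block-diagonality: you observe that the projections $P_l$ lie in \emph{both} algebras ($P_l=\rho_r\bigl(\alpha_l(1_{S(m|n,l)})\bigr)$, with $1_{S(m|n,l)}=\sum\xi_{\mathbf{i},\mathbf{i}}$ over diagonal orbit representatives, and $P_l=\sum_{|I|=l}\pi_r(w_I)\,x_e^{\underline{l}}\,\pi_r(w_I)^{-1}\in D(m|n,r)$), and this single observation replaces two separate devices in the paper: in Step 1 the paper instead embeds $\mathrm{End}_{D(m|n,r)}(\underline{V}^{\otimes r})$ into $\mathrm{End}_{\mathbb{K}\mathfrak{S}_r}(\underline{V}^{\otimes r})\cong S(m{+}1|n,r)$ or $S(m|n{+}1,r)$ and expands $\phi=\sum a_{\mathbf{i},\mathbf{j}}\xi_{\mathbf{i},\mathbf{j}}$, sorting by $\mathrm{rk}\,\mathbf{j}$ (so it uses Theorem \ref{theorem5.2}(1) at rank $r$ for the enlarged superspace, while you only ever use it at rank $l$); and in Step 2 the paper proves $\psi(\underline{V}_l^{\otimes r})\subseteq\underline{V}_l^{\otimes r}$ via the nondegeneracy statement of Proposition \ref{l5}, which your $P_l$-argument makes unnecessary. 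Your matrix-algebra reformulation $\mathrm{End}_{S(m|n,l)}(\underline{V}_l^{\otimes r})\cong M_N(\mathbb{K}\mathfrak{S}_l)$ with the spanning elements $E_I\,x_\sigma^{\underline{l}}\,F_J$ is the same content as the paper's conjugation step (c) with the $\tau_{I,J}$, just packaged as surjectivity of $D(m|n,r)_l$ onto the blocks rather than containment of an arbitrary $\varphi_{I,J}$ after conjugation. The trade-off: your route is more self-contained and avoids both the larger Schur superalgebra and Proposition \ref{l5}, at the cost of two details you should make explicit --- that $\sum\xi_{\mathbf{i},\mathbf{i},l}$ really is $P_l$ (immediate from Proposition \ref{pro4.2}, since $\alpha(\epsilon_{\mathbf{t},I}+\epsilon_{\mathbf{t},I},\epsilon_{\mathbf{t},I})=1$), and that for $\bar{v}=\bar{1}$ the $S(m|n,l)$-isomorphism $\underline{V}_I^{\otimes r}\cong V^{\otimes l}$ should be taken via $\pi_r(w_I)$ rather than the naive relabeling $v_{\mathbf{i},I}\mapsto v_{\mathbf{i}}$, which differs by $\mathbf{i}$-dependent signs; you flag the latter correctly, and since $a_I,b_J$ are invertible elements of $\mathrm{End}_{S(m|n,l)}(V^{\otimes l})=\pi_l(\mathbb{K}\mathfrak{S}_l)$, they indeed do not shrink the span.
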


\begin{proof}
\textbf{Step 1} We prove the first isomorphism in the following.	

According to Lemma \ref{l1}(2), it is clear that $\rho_r(S'(m|n,r))\subset\mathrm{End}_{D(m|n,r)}(\underline{V}^{\otimes r})$.
Recall that the representation $\rho_r:S(m\mid n,r)\longrightarrow\mathrm{End}_{\mathbb{K}\mathfrak{S}_r}(V^{\otimes r})$ is an isomorphism  introduced in Lemma \ref{Lemma5.1} and Theorem \ref{theorem5.2}(1).
It follows that $S(m+1|n,r)\cong\mathrm{End}_{\mathbb{K}\mathfrak{S}_r}(\underline{V}^{\otimes r})$ if $\bar{v}=\bar{0}$ and $S(m|n+1,r)\cong\mathrm{End}_{\mathbb{K}\mathfrak{S}_r}(\underline{V}^{\otimes r})$ if $\bar{v}=\bar{1}$.

Let $\Omega$ denote $\Omega(m+1|n,r)$ if $\bar{v}=\bar{0}$ and denote $\Omega(m|n+1,r)$ if $\bar{v}=\bar{1}$.

Let $\phi\in\mathrm{End}_{D(m|n,r)}(\underline{V}^{\otimes r})
\subset\mathrm{End}_{\mathbb{K}\mathfrak{S}_r}(\underline{V}^{\otimes r})\cong S(m+1|n,r)\text{ or }S(m|n+1)$ (it depends on the parity of $v$),
hence
$$\phi=\sum_{(\mathbf{i},\mathbf{j})\in \Omega}a_{\mathbf{i},\mathbf{i}}
\xi_{\mathbf{i},\mathbf{j}}
=\sum_{l=0}^{r}
\sum_{(\mathbf{i},\mathbf{j})\in \Omega,\atop \text{rk}\mathbf{j}=l}
a_{\mathbf{i},\mathbf{j}}\xi_{\mathbf{i},\mathbf{j}}=\sum_{l=0}^{r}\phi_l,$$
where $\phi_l=\sum\limits_{(\mathbf{i},\mathbf{j})\in \Omega,\atop rk\mathbf{j}=l}
a_{\mathbf{i},\mathbf{j}}\xi_{\mathbf{i},\mathbf{j}}$ and $\text{rk}\mathbf{j}=\left |\{i\mid v'_{\mathbf{j}}=v'_{j_1}\otimes...\otimes v'_{j_r},v'_{j_i}\in basis(V)\}\right |$.

Recall that $\xi_{\mathbf{i},\mathbf{j}}v'_{\mathbf{l}}
=\sum\limits_{(\mathbf{k},\mathbf{l})\sim (\mathbf{i},\mathbf{j})}
\sigma(\mathbf{i},\mathbf{j};\mathbf{k},\mathbf{l})
\alpha(\epsilon_{\mathbf{k}}+\epsilon_{\mathbf{l}},\epsilon_{\mathbf{l}})
v'_{\mathbf{k}}$,
so $\phi|_{\underline{V}_{l}^{\otimes r}}=\phi_l|_{\underline{V}_{l}^{\otimes r}}$
and $\phi_l|_{\underline{V}_{k}^{\otimes r}}=0$ when $k\neq l$.

(a) Show that $\phi({\underline{V}_{l}^{\otimes r}})\subseteq \underline{V}_{l}^{\otimes r}$.
	
	Choose $v'_{\mathbf{j}}\in \underline{V}_l^{\otimes r}$
	and fix $w\in\mathfrak{S}_r$ such that $v'_{\mathbf{j}}\cdot w=\gamma(\epsilon'_{\mathbf{j}},w)v_{\mathbf{j'},l}$.
	
	Suppose $\phi(v'_{\mathbf{j}})=u_l+u_k$ with $u_l\in \underline{V}_{l}^{\otimes r}$ and
	$u_k\notin \underline{V}_{l}^{\otimes r}$.
	Then $\phi\cdot w\cdot x_{id}^{\underline{l}}\cdot w^{-1}(v'_{\mathbf{j}})=\phi(v'_{\mathbf{j}})=u_l+u_k$.
	In another way $w\cdot x_{id}^{\underline{l}}\cdot w^{-1}\cdot\phi(v'_{\mathbf{j}})=w\cdot x_{id}^{\underline{l}}\cdot w^{-1}(u_l+u_k)\in \underline{V}_{l}^{\otimes r}$. So $u_k=0$.
	Thus $\phi({\underline{V}_{l}^{\otimes r}})\subseteq \underline{V}_{l}^{\otimes r}$.
	
(b) As the above argument, $\phi_l({\underline{V}_{\underline{l}}^{\otimes r}})\subseteq \underline{V}_{\underline{l}}^{\otimes r}$.

(c) From (b), we can make $\phi_l|_{\underline{V}_{\underline{l}}^{\otimes r}}=
	(\phi'_l\otimes id^{\otimes r-l})|_{\underline{V}_{\underline{l}}^{\otimes r}}$, where $\phi'_l\in \mathrm{End}_{\mathbb{K}}(V^{\otimes l})$.
	
	From $\phi\cdot x_\sigma^{\underline{l}}=x_\sigma^{\underline{l}}\cdot\phi$,
	it can conclude that $(\sigma\cdot\phi'_l)\otimes id^{\otimes r-l}=
	(\phi'_l\cdot \sigma)\otimes id^{\otimes r-l}$ for any $\sigma\in\mathfrak{S}_l$,
	then $\phi'_l\in \mathrm{End}_{\mathbb{K}\mathfrak{S}_l}(V^{\otimes l})\cong S(m|n,l)$.
	Recall the  embedding $S(m|n,l)\hookrightarrow S'(m|n,r)$ mentioned in 2.5.2, then we can find $\xi\in S'(m|n,r)$ such that $\xi|_{\underline{V}_{\underline{l}}^{\otimes r}}=(\phi'_l\otimes id^{\otimes r-l})|_{\underline{V}_{\underline{l}}^{\otimes r}}=\phi_l|_{\underline{V}_{\underline{l}}^{\otimes r}}$
	and $\xi|_{\underline{V}_{\underline{k}}^{\otimes r}}=0$ for $k\neq l$.
	
	So $\phi_l|_{\underline{V}_{I}^{\otimes r}}
	=w^{-1}\cdot \phi_l\cdot w|_{\underline{V}_{I}^{\otimes r}}
	=w^{-1}\cdot\phi_l|_{\underline{V}_{\underline{l}}^{\otimes r}}=w^{-1}\cdot\xi|_{\underline{V}_{\underline{l}}^{\otimes r}}
	=\xi|_{\underline{V}_{I}^{\otimes r}}$,
	where $w\in\mathfrak{S}_r$ such that $\underline{V}_{I}^{\otimes r}w=\underline{V}_{\underline{l}}^{\otimes r}$.
	Then $\phi_l=\xi\in S'(m|n,r)$ and $\phi\in S'(m|n,r)$.
	
	From (a), (b) and (c), $\rho_r(S'(m|n,r))=\mathrm{End}_{D(m|n,r)}(\underline{V}^{\otimes r})$. With the Lemma \ref{LLe}, we obtain $S'(m|n,r)\cong\mathrm{End}_{D(m|n,r)}(\underline{V}^{\otimes r})$.

\textbf{Step 2}	We prove the second isomorphism in the following.
	Recall that in 2.5.2, we have $S'(m|n,r)\cong\bigoplus\limits_{l=0}^{r}S(m|n,l)$.
	In particular, $S(m|n,k)(\underline{V}_l^{\otimes r})=0$ for any $l\neq k$.
	For any $\psi\in\mathrm{End}_{S'(m|n,r)}(\underline{V}^{\otimes r})$,
	any $\xi_k\in S(m|n,k)$, and $w\in \underline{V}^{\otimes r}_l$, suppose $\psi(w)=\sum\limits_{k=0}^rw_k$ with $w_k\in \underline{V}^{\otimes r}_k$, then
	$$\psi\sum\limits_{k=0}^r\xi_k(w)=\psi\xi_l(w)=\xi_l\psi(w)=\xi_l(w_l)\in\underline{V}^{\otimes r}_l$$
	and
	$$\sum\limits_{k=0}^r\xi_k\psi(w)=\sum\limits_{k=0}^r\xi_k(w_k).$$
	Then $\sum\limits_{k=0}^r\xi_k(w_k)\in\underline{V}^{\otimes r}_l$.
	So $\xi_kw_k=0$ for $k\neq l$. This means $S(m|n,k)w_k=0$. By Proposition \ref{l5}, $w_k=0$ for all $k\neq l$.
	Thus $\psi(\underline{V}^{\otimes r}_l)\subseteq\underline{V}^{\otimes r}_l$.

	(a) Prove $\mathrm{End}_{S'(m|n,r)}(\underline{V}^{\otimes r})=\bigoplus\limits_{l=0}^r\mathrm{End}_{S'(m|n,l)}(\underline{V}_l^{\otimes r})$.

From the above we know for any given $\psi\in\mathrm{End}_{S'(m\mid n,r)}(\underline{V}^{\otimes r})$,
$\psi(\underline{V}^{\otimes r}_l)\subseteq\underline{V}^{\otimes r}_l$ for $0\leq l\leq r$.
Let $\psi_l=\psi|_{\underline{V}^{\otimes r}_l}$ for $0\leq l\leq r$. Then $\psi_l\in\mathrm{End}_{S'(m|n,r)}(\underline{V}^{\otimes r}_l)$ and $\psi=\sum_{l=0}^{r}\psi_l$. So $\mathrm{End}_{S'(m\mid n,r)}(\underline{V}^{\otimes r})\subseteq\bigoplus\limits_{l=0}^r\mathrm{End}_{S'(m|n,l)}(\underline{V}_l^{\otimes r})$.
With the beginning of this section, we have $\mathrm{End}_{S'(m|n,r)}(\underline{V}^{\otimes r})=\bigoplus\limits_{l=0}^r\mathrm{End}_{S'(m|n,l)}(\underline{V}_l^{\otimes r})$.

(b) Prove $\mathrm{End}_{S'(m|n,l)}({\underline{V}_{\underline{l}}^{\otimes r}})\subseteq D(m|n,r)_l=\Xi_l(\mathcal{H}_r^l)$.

For any $\varphi\in\mathrm{End}_{S'(m|n,l)}({\underline{V}_{\underline{l}}^{\otimes r}})$,
it can be written as $\varphi'\otimes id^{\otimes r-l}$ with $\varphi'\in \mathrm{End}_{\mathbb{K}}(V^{\otimes l})$.
By definition,
$$\xi_{\mathbf{i},\mathbf{j},l}(v_{\mathbf{t}}\otimes v^{\otimes r-l})
=\xi_{\mathbf{i},\mathbf{j}}(v_{\mathbf{t}})\otimes v^{\otimes r-l},$$
$$\xi_{\mathbf{i},\mathbf{j},k}(v_{\mathbf{t}}\otimes v^{\otimes r-l})
=0\text{ for }k\neq l,$$
so $\varphi$ commutes with $\xi_{\mathbf{i},\mathbf{j},l}$ if and only if $\varphi'$ commutes with $\xi_{\mathbf{i},\mathbf{j}}$. Because $l\leq r\leq m+n$, by Lemma \ref{theorem5.2}(2), then $\varphi'\in  \mathbb{K}\mathfrak{S}_l\cong\mathrm{End}_{S(m|n,l)}(V^{\otimes l})$.

Thus the homomorphism of algebras $\mathrm{End}_{S'(m\mid n,l)}({\underline{V}_{\underline{l}}^{\otimes r}})\longrightarrow D(m|n,r)_l$ defined by $\varphi\longmapsto \sum\limits_{\sigma\in\mathfrak{S}_l}g_{\sigma}\Xi_l(\mathbf{x}_\sigma)$ is injective, where $\varphi'=\sum\limits_{\sigma\in\mathfrak{S}_l}g_{\sigma}\sigma$ and $g_{\sigma}\in \mathbb{K}$ for all $\sigma\in\mathfrak{S}_l$.

(c) Prove $\mathrm{End}_{S'(m|n,r)}(\underline{V}_l^{\otimes r})=D(m|n,r)_l$ for any $l=0,1,...,r$.

It is obviously that $D(m|n,r)_l\subseteq\mathrm{End}_{S'(m\mid n,l)}(\underline{V}_l^{\otimes r})$ by Lemma \ref{l1}(2).

For any given $\varphi\in\mathrm{End}_{S'(m|n,r)}(\underline{V}_l^{\otimes r})=\bigoplus\limits_{I\subseteq\underline{r},J\subseteq\underline{r},\atop|I|=|J|=l}\mathrm{Hom}_{S'(m\mid n,r)}(\underline{V}_I^{\otimes r},\underline{V}_J^{\otimes r})$,
let $\varphi=\sum\limits_{I\subseteq\underline{r},J\subseteq\underline{r},\atop|I|=|J|=l}\varphi_{I,J}$ where $\varphi_{I,J}\in \mathrm{Hom}_{S'(m|n,r)}(\underline{V}_I^{\otimes r},\underline{V}_J^{\otimes r})$.
Fix $\{\tau_{I,J}\in\mathfrak{S}_r\}$ such that $\Xi_l(\tau_{I,J})(\underline{V}_{I}^{\otimes r})=\underline{V}_J^{\otimes r}$.
Then $\Xi_l(\tau^{-1}_{J,\underline{l}})\cdot\varphi_{I,J}\cdot\Xi_l(\tau_{\underline{l},I})\in\mathrm{End}_{S'(m|n,l)}({\underline{V}_{\underline{l}}^{\otimes r}})\subseteq D(m|n,r)_l$.
So $\varphi_{I,J}\in\Xi_l(\tau_{J,\underline{l}})\cdot D(m|n,r)_l\cdot\Xi_l(\tau_{J,\underline{l}})^{-1}\subset D(m|n,r)_l$
and $\varphi\in D(m|n,r)_l$.

So $\mathrm{End}_{S'(m|n,r)}(\underline{V}_l^{\otimes r})=D(m|n,r)_l$.

(d) Prove $D(m|n,r)=\bigoplus\limits_{l=0}^rD(m|n,r)_l$.

$D(m|n,r)$ is generated by $\{\Xi(\mathbf{x}_\sigma^{(l)}),\sigma\in\mathfrak{S}_l,1\leq l\leq r\}$ and $\{\Xi(\mathbf{s}_i),1\leq i\leq r-1\}$.
And $$\Xi(\mathbf{x}_\sigma^{(l)})=\Xi_l(\mathbf{x}_\sigma^{(l)})\in D(m|n,r)_l,$$
$$\Xi(\mathbf{s}_i)=\sum_{l=0}^{r}\Xi_l(\mathbf{s}_i)\in \bigoplus_{l=0}^rD(m|n,r)_l.$$
So $D(m|n,r)\subseteq\bigoplus\limits_{l=0}^rD(m|n,r)_l$. Therefore $D(m|n,r)=\bigoplus\limits_{l=0}^rD(m|n,r)_l$.

Consequently, $\mathrm{End}_{S'(m|n,r)}(\underline{V}^{\otimes r})=\bigoplus\limits_{l=0}^r\mathrm{End}_{S'(m|n,r)}(\underline{V}_l^{\otimes r})=\bigoplus\limits_{l=0}^rD(m|n,r)_l=D(m|n,r)$.

\end{proof}


%

\begin{remark}
	The parabolic  Schur-Sergeev duality for general linear supergroups has been studied in \cite[Chapter 7]{L2022}.
\end{remark}

\bibliographystyle{amsplain}

\end{document}